\def\b#1{\boldsymbol{#1}}
\newcounter{example}[section]
\newenvironment{example}[1][]{\refstepcounter{example}\par\medskip
	\textbf{Example~\thesection.\theexample. #1} \rmfamily}{\medskip}
\newcolumntype{P}[1]{>{\centering\arraybackslash}p{#1}}
\theoremstyle{plain}
\newtheorem{theorem}{Theorem}[section]
\newtheorem{lemma}[theorem]{Lemma}
\theoremstyle{remark}
\def\b#1{\boldsymbol{#1}}
\def\norm #1{{\left\vert\kern-0.25ex\left\vert\kern-0.25ex\left\vert #1 
		\right\vert\kern-0.25ex\right\vert\kern-0.25ex\right\vert}}
\begin{document}
	\allowdisplaybreaks[4]
	\numberwithin{figure}{section}
	\numberwithin{table}{section}
	\numberwithin{equation}{section}
	%
	\title[Non-conforming FEM for the quasi-static contact problem]
	{Convergence analysis of Non-Conforming Finite Element Method for quasi-static contact problem}
	\author{Kamana Porwal}
	\email{kamana@maths.iitd.ac.in}
	\address{Department of Mathematics, Indian Institute of Technology Delhi, New Delhi - 110016}
	\author{Tanvi Wadhawan}
	\email{maz188452@maths.iitd.ac.in}
	\address{Department of Mathematics, Indian Institute of Technology Delhi, New Delhi - 110016}
	\date{}
	\begin{abstract}
		In this article, we addressed the numerical solution of a non-linear evolutionary variational inequality, which is encountered in the investigation of quasi-static contact problems. Our study encompasses both the semi-discrete and fully-discrete schemes, where we employ the backward Euler method for time discretization and utilize the lowest order Crouzeix-Raviart nonconforming finite element method for spatial discretization. By assuming appropriate regularity conditions on the solution, we establish \emph{a priori} error analysis for these schemes, achieving the optimal convergence order for linear elements. To illustrate the numerical convergence rates, we provide numerical results on a two-dimensional test problem.
	\end{abstract}
	\keywords{Finite element method,  \emph{a priori} error estimates, variational inequalities, quasi-static problem, , supremum norm}
	\subjclass{65N30, 65N15}
	\maketitle
	\allowdisplaybreaks
	\def\R{\mathbb{R}}
	\def\cA{\mathcal{A}}
	\def\cK{\mathcal{K}}
	\def\cN{\mathcal{N}}
	\def\p{\partial}
	\def\O{\Omega}
	\def\bbP{\mathbb{P}}
	\def\cV{\mathcal{V}}
	\def\cM{\mathcal{M}}
	\def\cT{\mathcal{T}}
	\def\cE{\mathcal{E}}
	\def\cF{\mathcal{F}}
	\def \cW{\mathcal{W}}
	\def \cJ{\mathcal{J}}
	\def \cV{\mathcal{V}}
	\def\bF{\mathbb{F}}
	\def \cW{\mathcal{F}}
	\def\bC{\mathbb{C}}
	\def\bN{\mathbb{N}}
	\def\n{\text{n}}
	\def\ssT{{\scriptscriptstyle T}}
	\def\HT{{H^2(\O,\cT_h)}}
	\def\mean#1{\left\{\hskip -5pt\left\{#1\right\}\hskip -5pt\right\}}
	\def\jump#1{\left[\hskip -3.5pt\left[#1\right]\hskip -3.5pt\right]}
	\def\smean#1{\{\hskip -3pt\{#1\}\hskip -3pt\}}
	\def\sjump#1{[\hskip -1.5pt[#1]\hskip -1.5pt]}
	\def\jumptwo{\jump{\frac{\p^2 u_h}{\p n^2}}}
	
	\section{Introduction}
	\par 
	\noindent
	Contact problems form the core of various mechanical structures and hold significant relevance in hydrostatics and thermostatics. These issues manifest in everyday scenarios, such as the interaction of braking pads with wheels, tires with roads, and pistons with skirts. Due to the significant importance of contact processes in structural and mechanical systems, substantial efforts have been dedicated to their modeling and numerical simulations. The variational inequalities, often serving as variational formulations for these contact problems, have proven to be a potent tool in the mathematical analysis of these models (refer to, for instance, \cite{cocou2018variational, capatina2014existence, sofonea2009variational, shillor2004models, kikuchi1988contact} and related references). In article \cite{kikuchi1988contact}, an early attempt was made to examine frictional contact problems within the framework of variational inequalities. For a comprehensive resource on the analysis and numerical approximations of contact problems involving elastic materials, both with and without friction, one can refer to \cite{duvant2008, han2002quasistatic}. 
	\par 
	In the past decade, research has been extended to study contact problems involving viscoelastic and viscoplastic materials, giving rise to quasi-static contact problems which is a type of time-dependent model. Quasi-static contact problems come into play when the applied forces on a system change gradually over time, resulting in negligible acceleration. We highlight the contributions that have enhanced the mathematical examination of quasi-static frictional contact, including findings from \cite{andersson1991quasistatic, klarbring1988frictional, klarbring1989friction}, which incorporate normal compliance condition, and \cite{cocu1996formulation}, which employs regularization technique. The variational analysis of certain quasi-static contact problems within the context of linearized elasticity is available in references such as \cite{alart1991mixed, andersson2001review,klarbring1988frictional}.  Quasi-static Signorini frictional problems utilizing a nonlocal Coulomb friction law have been examined in \cite{cocu1984existence, cocu1996formulation}. On the other hand, in articles \cite{e2000existence, rocca1999existence, rocca2001existence} the same problem is dealt when a local Coulomb friction law is applied. The \emph{a priori} and \emph{a posteriori} error analysis of finite element methods for linear viscoelastic problems without constraints is detailed in article \cite{fernandez2009priori}. The numerical analysis of a bilateral frictional contact problem for linear elastic material is carried in \cite{barboteu2002numerical}. In articles \cite{djoko2007discontinuous, djoko2007discontinuous1}, discontinuous Galerkin (DG) formulation was devised for the gradient plasticity problem which is formulated as a quasi-static variational inequality of the second type. The article \cite{wang2014discontinuous} focuses on \emph{a priori} error analysis for quasi-static contact problems within a discontinuous Galerkin (DG) framework, considering both semi-discrete and fully-discrete variational formulations. In their work, the authors attained optimal convergence rates when the solution is sufficiently regular.
	\par 
	\noindent
	The main objective of this study is to conduct a numerical analysis of a quasi-static contact problem. In this context, we model the frictional process using Tresca friction law, which includes a prescribed friction bound. This friction bound function represents the maximum magnitude of frictional traction before slip initiates. We investigate both spatially semi-discrete and fully-discrete schemes for this problem, employing finite difference discretization in time and non-conforming finite element discretization in space. 
	\par 
	\noindent
	{\large{\textbf{Mechanical Problem}}}
	\par 
	The quasi-static contact model describes the evolution of the contact process between the linear elastic body and the rigid foundation. Let $\Omega \subset \mathbb{R}^2 $ be the reference configuration of the linear elastic body with boundary $\partial \Omega = \Gamma$. The boundary $\Gamma$ is decomposed into three mutually disjoint, relatively open segments,  the Dirichlet boundary ${\Gamma}_D$, the Neumann boundary ${\Gamma}_N$ and the potential contact boundary ${\Gamma}_C$ with meas ($\Gamma_D)>0$.  The motion of the object is mathematically described using vector-valued functions $\b{u}$ which depends both on position and time. Consequently, a material particle initially situated at the position $\b{x} =(x_1,x_2)$ will find itself at the position $\b{u}(\b{x}, t)$ at time $t$.  Let $\mathcal{I}=[0,T]$ be the time interval of consideration for some $T>0$. The linearized strain tensor, denoted by   $\b{\epsilon}(\b{u}(\b{x}, t))$ belongs to the class of second order symmetric tensors $\mathbb{S}^2$ and is  defined as 
	\begin{align*}
		\b{\epsilon}(\b{u}(\b{x}, t)) := \dfrac{1}{2} (\b{\nabla}(\b{u}(\b{x}, t))+\b{\nabla}(\b{u}(\b{x}, t))^T).
	\end{align*}
	In this context, $t \in I$ represents the time variable and   $\b{u}(\cdot,t) :\Omega  \rightarrow \mathbb{R}^2$ characterizes the displacement variable. The stress tensor $\b{\sigma}(\b{u}(\b{x}, t))$ obeys the following constitutive relation given by Hooke's 
	law 
	\begin{align} \label{sigma}
		\b{\sigma}(\b{u}(\b{x}, t)):= \mathcal{C}\b{u}(\b{x}, t)
	\end{align}
	where  $\mathcal{C} :\mathbb{S}^2 \times \mathbb{R}^2 \rightarrow \mathbb{S}^2$ is the bounded, symmetric and positive definite fourth-order elasticity tensor. In the component form, the relation $\eqref{sigma}$ can be rewritten as 
	\begin{align}\label{paper5_sigma2}
		\sigma_{ij} = \mathcal{C}_{ijkl} \epsilon_{ij} \quad1\leq i,j,k,l \leq 2.
	\end{align}
	Given the homogeneity and isotropy of the linear elastic body, the fourth-order elasticity tensor can be explicitly expressed as follows:
	\begin{align*}
		\mathcal{C}_{ijkl} = \lambda \delta_{ij}\delta_{kl}+\mu (\delta_{ik}\delta_{jl} +\delta_{il}\delta_{jk}), \quad 1\leq i,j,k,l \leq 2,
	\end{align*}
	where, $\lambda$ and $\mu$ are the Lam$\acute{e}$ coefficients and $\delta_{ij}$ is the Kronecker delta. Thus, the stress tensor $\b{\sigma}$ is given by 
	\begin{align*}
		\b{\sigma}((\b{u}(\b{x}, t)))=\lambda(tr(\b{\epsilon}(\b{u}(\b{x}, t)))\textbf{Id} +2\mu{\b{\epsilon}(\b{u}(\b{x}, t))}, 
	\end{align*}
	where $\textbf{Id}$ denotes the identity matrix of order 2. 
	\par 
	\noindent
	First we shall recall the certain functional spaces that shall be relevant in the further analysis. To this end, let $X$ denotes a real Hilbert space with norm $\|\cdot\|_X$.  For $1 \leq p \leq \infty$, let $L^p(\mathcal{I};X)$ denotes the space of measurable functions $v: \mathcal{I} \rightarrow X $ endowed with the norm
	\begin{align*}
		\begin{split}
			\begin{aligned}
				\|v\|_{L^{p}(\mathcal{I};X)} = \begin{cases} &\bigg(\int\limits_{\mathcal{I}}\|v\|^p_X~dt \bigg)^{\frac{1}{p}}~~~\text{if}~ 1\leq p < \infty, \\
					&\underset{t\in \mathcal{I}}{\text{ess~sup}}~\|v\|_X~~~~~~\text{if}~ p=\infty. \end{cases}
			\end{aligned} 
		\end{split}
	\end{align*}
	Further, for $k \in \mathbb{N}$,  the Boncher space $W^{k,p}(\mathcal{I};X)$ is defined as
	\begin{align*}
		W^{k,p}(\mathcal{I};X) : = \bigg\{ {v} \in L^p(\mathcal{I};X): \|v^{(j)}\|_{L^p(\mathcal{I};X)} < \infty,~\forall~j\leq k \bigg\}
	\end{align*}
	and is equipped with the norm
	\begin{align*}
		\begin{split}
			\begin{aligned}
				\|v\|_{	W^{k,p}(\mathcal{I};X)} = \begin{cases} &\bigg(\int\limits_{\mathcal{I}}\sum\limits_{j=1}^k\|v^{(j)}\|^p_X~dt \bigg)^{\frac{1}{p}}~~~~~\text{if}~ 1\leq p < \infty, \\
					&\underset{0\leq j\leq k}{\text{max}
					}~\underset{t\in \mathcal{I}}{\text{ess~sup}}~\|v^{(j)}\|_X~~~~\text{if}~ p=\infty. \end{cases}
			\end{aligned} 
		\end{split}
	\end{align*}
	Here, we used the notation $v^{(j)}$ to denote the distributional derivative of $v$ of order $|j|$. Define $C(\mathcal{I};X)$ to be the space of continuous functions $v : \mathcal{I}\rightarrow X$. 
	\par 
	\noindent
	In this study, we distinguish vector-valued functions using bold symbols, while the scalar-valued functions are denoted in the conventional manner. To be precise, for any Hilbert space $X$, we define $\b{X}:=X\times X.$
	Additionally, we suppress explicit dependence of the quantities on the spatial variable $\b{x},$ i.e. we denote $\b{u}(\b{x},t)$ by $\b{u}(t)$ in the subsequent analysis. Let $\b{n}$ represents the outward unit normal vector to $\Gamma$. Then, for any given vector $\b{v}$, we can decompose it into its normal and tangential component as $v_n:= \b{v}\cdot \b{n}$ and $\b{v}_\tau :=\b{v}-v_n\b{n}$, respectively. In a similar manner, for any tensor-valued function $\b{\Psi}$, we denote $\Psi_n := \b{\Psi}\b{n}\cdot\b{n}$ and $\b{\Psi_{\tau}} := \b{\Psi}\b{n} - \Psi_n\b{n}$ as its normal and tangential component, respectively. Furthermore, we will utilize the subsequent decomposition formula:
	\begin{align}\label{decomp}
		(\b{\Psi}\b{n})\cdot \b{v} = \Psi_nv_n +\b{\Psi_{\tau}}\cdot \b{v}_{\tau}.
	\end{align}
	\par 
	{\underline{\large\textbf{Strong Formulation of Quasi-Static Contact Problem}}}
	\par
	\noindent
	In quasi-static contact problem, we need to find the displacement vector $\bm{u}: \Omega \times [0,T] \rightarrow \mathbb{R}^2$ satisfying the following relations:
	\begin{align}
		\bm{-div} ~~\bm{\sigma}(\bm{u}(t)) &= \bm{f}(t) ~~~~~~~~\textit{in}~\Omega \times (0,T),\label{problem3.3}\\
		\bm{u}(t) &= \bm{0} ~~~~~~~~~~~\textit{on}~\Gamma_D  \times (0,T), \label{problem3.4}\\
		\bm{\sigma}(\bm{u}(t)\bm{n} &= \bm{g}(t) ~~~~~~~~\textit{on}~\Gamma_N  \times (0,T),   \label{problem3.5}\\
		\bm{u}(\bm{x}, 0)&=\bm{u_o} ~~~~~~~~~~\text{in}~~\Omega, \label{problem3.9}  \\
		{u}_{n}= 0, ~|{\bm{\sigma}_\tau}| &\leq~g_a ~~~~~~~~~~{on}~\Gamma_C \times (0,T),  \label{problem3.7}
	\end{align}
	\begin{align}\label{problem3.8}
		\begin{split}
			\left.\begin{aligned}
				|\bm{\bm{\sigma}_\tau}| &< g_a \implies {\bm{\dot{u}_{\tau}}} = \b{0} \\
				|\bm{\bm{\sigma}_\tau}| &= g_a \implies {\bm{\dot{u}_{\tau}}} = -\vartheta \bm{\sigma}_\tau~\textit{for some}~\vartheta \geq 0
			\end{aligned}\right\} ~~~on~\Gamma_C \times (0,T),
		\end{split}
	\end{align}
	where, $\bm{f} \in \bm{W^{1,\infty}}(I; \bm{L^2}(\Omega))$  and $\bm{g} \in \bm{W^{1,\infty}}(I; \bm{L^2}(\Gamma_N))$ represent the density of volume force and surface traction force,  respectively.  Further $g_a \in L^{\infty}(\Gamma_C)$ denotes the frictional bound function. 
	\par 
	The equation \eqref{problem3.3} refers to the equilibrium equation, signifying that the volume force of density $\b{f}(t)$ is acting in $\Omega$. The equation \eqref{problem3.4} justifies that the displacement field vanishes on $\Gamma_D$ indicating that the body is  clamped within this region. A surface traction force with density $\b{g}(t)$ acts on region $\Gamma_N$, resulting in the relationship described in equation \eqref{problem3.5}. The relation \eqref{problem3.9} describes the initial position of the body at time $t=0$. The contact condition described in equation \eqref{problem3.8} is called tresca friction law. Therein, when the strict inequality is satisfied, the material point resides within the \emph{stick zone}.  In contrast, when the equality is met, the material point is in the \emph{slip zone}. The boundary of these zones is not determined {\emph{a priori}} and therefore is an inherent part of the problem.   
	\par
	{\underline{\large\textbf{Variational Formulation of Quasi-Static Contact Problem}}}
	\par 
	In order to establish the variational formulation for the contact problem \eqref{problem3.3}-\eqref{problem3.8},  we introduce Hilbert space $
	\bm{V}$ as
	\begin{align*}
		\bm{V}:= \bigg\{ \bm{v} \in \bm{H^1}(\Omega):   \bm{v} = \bm{0}~ \text{on}~\Gamma_D, v_n=0  ~on~\Gamma_C \bigg\},
	\end{align*}
	The variational formulation for the quasi-static contact problem is to find $\bm{u}: \mathcal{I}\rightarrow \bm{V}$ such that for a.e. $t \in \mathcal{I},$ the following holds
	\begin{align}\label{paper5_1.9}
		a(\bm{u}(t), \bm{v} - \bm{\dot u}(t))+j(\bm{v}) - j(\bm{\dot{u}}(t)) &\geq (\bm{l}(t), \bm{v} -\bm{\dot{u}}(t)) \quad \forall ~\bm{v} \in \bm{V}, \\
		\bm{u}(0) = \bm{u_o}, \label{paper5_1.10}
	\end{align}
	where 
	\begin{align*}
		a(\bm{v, w})&:=\int\limits_\Omega \bm{\sigma} (\bm{v})\colon \bm{\epsilon}(\bm{w})~dx,\\
		j(\bm{v})&:= \int\limits_{\Gamma_C} g_a |\bm{{v_\tau}}|~d\sigma, \\
		(\bm{l}(t), \bm{v})&:= \int\limits_\Omega \bm{f}(t) \cdot \bm{v}~dx + \int\limits_ {\Gamma_N} \bm{g}(t) \cdot \bm{v}~d\sigma~~\forall~ \bm{v},\bm{w}\in\bm{V}.  
	\end{align*}
	Further we assume the initial data satisfy $\bm{u_o} \in \bm{V}$ with 
	\begin{align}\label{VI1.9}
		a(\bm{u_o}, \bm{v})+j(\bm{v})  \geq (\bm{l}(0), \bm{v})~~\forall ~\bm{v} \in \bm{V}.
	\end{align}
	The existence and uniqueness of the solution $\b{u}(t)$ for equations \eqref{paper5_1.9}-\eqref{paper5_1.10} with the assumption \eqref{VI1.9} has been proved in the book \cite{han2002quasistatic}. In addition,  the map $ \psi: \bm{W^{1,\infty}}(I; \bm{V}) \times \bm{V}  \longrightarrow  \bm{L^{\infty}}(I; \bm{V})$ defined by 
	\begin{align*}
		\psi(\bm{l}(t), \bm{u_o}) & \longrightarrow \bm{u}
	\end{align*}
	is a Lipschitz continuous function from $\bm{W^{1,\infty}}(I; \bm{V}) \times \bm{V}  \longrightarrow  \bm{L^{\infty}}(I; \bm{V})$.
	\par 
	The rest of the article is structured as follows: Section 2 begins by introducing the fundamental notations. Following that, we define the discrete space on which the discrete problem is formulated. Therein, we also introduce the Crouzeix-Raviart interpolation operator, denoted by $\mathbf{I}^{\b{CR}}$. This operator exhibits commutativity with the time derivative which is pivotal aspect in the error analysis. The subsequent sections, namely Sections 3 and 4, are devoted to establish \emph{a priori} error estimates for the semi-discrete scheme and the fully-discrete scheme, respectively.
	Finally, in Section 5, we present results of a numerical experiment designed to illustrate the theoretical findings and results discussed in the preceding sections.
	
	\section{Discrete Problem}
	In this segment, we first list the notations which will be useful in the forthcoming analysis.
	\begin{itemize}
		\item  $\mathcal{T}_h$ denotes a regular simplicial triangulation of domain $\O$,
		\item $T$ is an element of $\mathcal{T}_h$,
		\item $\mathcal{V}_T$ refers to the set of all vertices of the triangle $T$,
		\item $\mathcal{V}_h$ refers to the set of all vertices of $\mathcal{T}_h$,
		\item $\mathcal{V}_e$ denotes the set of two vertices of edge $e$,
		\item $\mathcal{E}_h$ denotes the set of all edges of  $\mathcal{T}_h$,
		\item $\mathcal{E}_h^i$ denotes the set of all interior edges of $\mathcal{T}_h$,
		\item $\mathcal{E}_h^b$ denotes the set of all boundary edges of $\mathcal{T}_h$,
		\item $\mathcal{E}_h^D$ denotes the set of all edges lying  on $\Gamma_D$,
		\item $\mathcal{E}_h^C$ denotes the set of all edges lying on $\Gamma_C$,
		\item $\mathcal{E}_h^0$ :=  $\mathcal{E}_h^i \cup  \mathcal{E}_h^D$,
		\item $\mathcal{V}_h^D$ refers to the set of all vertices lying on $\overline{\Gamma}_D$, 
		\item $\mathcal{V}_h^N$ refers to the set of all vertices lying on ${\Gamma}_N$,
		\item $\mathcal{V}_h^C$ refers to the set of all vertices lying on $\overline{\Gamma}_C$,
		\item $\mathcal{M}_h^C$ refers to the set of midpoints of edges lying on ${\Gamma}_C$,
		\item $\mathcal{E}_p$ denotes the set of all edges sharing the node $p$,
		\item $\mathcal{T}_p$ denotes the set of all triangles sharing the node $p$,
				
		\item $\mathcal{T}_e$ denotes the set of all triangles sharing the edge $e$,
		\item  $h_T$ is the diameter of the triangle $T$,
		\item $h$:=  max $\{h_T: T \in \mathcal{T}_h\}$,
		\item  $h_e$ is the length of an edge $e$,
		\item $\mathbb{P}_{k}(T)$ denotes the space of polynomials of degree $\leq k$ defined on $T$ where $0 \leq k \in \mathbb{Z}$,
		\item $|S|$ denotes the cardinality of the set $S$.
	\end{itemize}
	Throughout the analysis, we assume $C$ to be a postive generic constant independent of mesh parameter $h$. Further, we adopt the notation $X \lesssim Y$ to represent $X \leq CY.$
	\par 
	\noindent 
	In order to deal with discontinuous functions conveniently, we introduce the broken Sobolev space $H^1(\Omega, \mathcal{T}_h)$ as follows:
	\begin{align*}
		H^1(\Omega, \mathcal{T}_h) : =\bigg\{ v \in L^2(\Omega) : v|_{T} \in {H}^1(s)~\forall~T\in \mathcal{T}_h\bigg\}.
	\end{align*}
	In the sequel, we will define jumps and averages of scalar, vector and tensor-valued functions. To this end, let $e \in \mathcal{E}^i_h$ be an arbitrary interior edge shared by two adjacent elements $T_1$ and $T_2,$  i.e., $e \in \partial{T}_1 \cap \partial{T}_2$.  Denote by $\b{n}_{T_1}$, the outward unit normal vector of $e$ pointing from $T_1$ to $T_2$ and set $\b{n}_{T_2}=-\b{n}_{T_1}$. The jumps $\sjump{\cdot}$ and averages $\smean{\cdot}$ are defined in the following way:
	\begin{itemize}
		\item For function $w \in 	H^1(\Omega, \mathcal{T}_h)$, define
		\begin{align*}
			\smean{w}&:= \dfrac{w|_{T_1} + w|_{T_2}}{2},~~~\sjump{w}:= w|_{T_1}\b{n}_{T_1} + w|_{T_2}\b{n}_{T_2} ; 
		\end{align*} 
		\item For any function $\b{v} \in [H^1(\Omega, \mathcal{T}_h)]^2$, define
		\begin{align*}
			\smean{\b{v}}&:= \dfrac{\b{v}|_{T_1} + \b{v}|_{T_2}}{2},~~~~\sjump{\b{v}}:= \b{v}|_{T_1}\otimes\b{n}_{T_1} + \b{v}|_{T_2}\otimes\b{n}_{T_2};
		\end{align*} 
		\item For function $\b{\phi} \in [H^1(\Omega, \mathcal{T}_h)]^{2 \times 2}$, set
		\begin{align*}
			\smean{\b{\phi}}&:= \dfrac{\b{\phi}|_{T_1} + \b{\phi}|_{T_2}}{2},~~~\sjump{\b{\phi}}:= \b{\phi}|_{T_1}\b{n}_{T_1} + \b{\phi}|_{T_2}\b{n}_{T_2};
		\end{align*}
	\end{itemize}
	Here, the notation $a\otimes b$ is used to describe the dyadic product of two vectors $a, b \in \mathbb{R}^2.$  
	Analogously, for the notational convenience, we define jumps and averages for the scalar-valued function $w \in {H^1}(\O, \mathcal{T}_h) $,  vector-valued function $\b{v} \in \b{H^1}(\O, \mathcal{T}_h)$ and tensor-valued function $\b{\phi} \in [{H^1}(\O, \mathcal{T}_h)]^{2 \times 2} $ on the boundary edges $e \in \mathcal{E}^b_h$ as follows
	\begin{align*}
		\smean{w}&:=w,~~~~~~~~~\sjump{w}:= w\b{n_e} ; \\
		\smean{\b{v}}&:= \b{v},~~~~~~~~~~\sjump{\b{v}}:= \b{v}\otimes\b{n_e} ; \\
		\smean{\b{\phi}}&:=  \b{\phi},~~~~~~~~~\sjump{\b{\phi}}:= \b{\phi}\b{n_e} ;
	\end{align*}
	where $T \in \mathcal{T}_h$ contains the edge $e$ and $\b{n_e}$ is the outward unit normal on the edge $e$ pointing outside $T$.
	\par 
	\noindent
	Next, we introduce Crouziex-Raviart finite element space to discretize problem \eqref{paper5_1.9}-\eqref{paper5_1.10} as
	\begin{align}
		\b{V^h_{C R}}:= &\bigg\{\b{v_{h}} \in \b{L^{2}}(\Omega):\b{v_{h}}|_{T} \in \mathbb{P}_{1}(T)~\forall ~T \in \mathcal{T}_{h}, \int_{e}\sjump{\b{v_{h}}}~ \mathrm{d}\sigma=0 \quad \forall~e \in \mathcal{E}_{h}^{i},\notag\\
		&\text { and } \b{v_{h}}\left(m_{e}\right)=0 \quad \forall~ m_e \in \mathcal{M}_{h}^{D}\bigg\},
	\end{align}
	and
	\begin{align}
		\b{\mathcal{K}^h_{C R}}:=\left\{\b{v_{h}} \in \b{V^h_{C R}}\left(\mathcal{T}_{h}\right): v_{hn}\left(m_{e}\right) = 0 \quad \forall~m_e \in \mathcal{M}_{h}^{C}\right\}~~~~~~~~~~~~~~~
	\end{align}
	where $\b{n}$ denotes the outward unit normal vector on $\Gamma_C$.
	\par 
	\noindent
	At the several occasions,  we will be using the following inverse and trace inequalities.
	\begin{itemize}
		\item {\large{\textbf{Trace inequality}}} \cite{scott1990finite, ciarlet2002finite}
		\begin{lemma}\label{discrete_trace}
			Let $T \in \cT_h$ and $e$ be an edge of $T$.  Then for ${\phi} \in L^p(s),~1\leq p<\infty$,  the following holds:
			\begin{eqnarray*}
				\|\phi\|^p_{L^p(e)} \lesssim h_e^{-1} \Big( \|\phi\|^p_{L^p(T)} + h_e^p \|\nabla \phi\|^p_{L^p(T)}\Big).
			\end{eqnarray*}
		\end{lemma}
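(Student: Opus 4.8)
The plan is to reduce the estimate to a fixed reference configuration by an affine change of variables, and then to restore the correct power of $h_e$ through standard scaling estimates that exploit the shape-regularity of $\cT_h$. Since the right-hand side already involves $\nabla\phi$, I implicitly regard $\phi\in W^{1,p}(T)$ (here $L^p(s)$ should read $L^p(T)$); because $C^\infty(\overline{T})$ is dense in $W^{1,p}(T)$ for $1\le p<\infty$, it suffices to prove the inequality for smooth $\phi$ and pass to the limit, both sides being continuous with respect to the $W^{1,p}(T)$-norm.

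First I would fix a reference triangle $\hat T$ carrying a distinguished edge $\hat e$, and record the fixed, $h$-independent trace inequality
\[
\|\hat\phi\|^p_{L^p(\hat e)}\;\le\;C_{\hat T}\Big(\|\hat\phi\|^p_{L^p(\hat T)}+\|\hat\nabla\hat\phi\|^p_{L^p(\hat T)}\Big),
\]
which is nothing but the boundedness of the trace operator $W^{1,p}(\hat T)\to L^p(\hat e)$ on the fixed Lipschitz domain $\hat T$, valid for every $1\le p<\infty$. Letting $F_T(\hat x)=B_T\hat x+b_T$ denote the affine map with $F_T(\hat T)=T$ and $F_T(\hat e)=e$, and putting $\hat\phi=\phi\circ F_T$, I would then assemble the three scaling relations: the volume identity $\|\phi\|^p_{L^p(T)}=|\det B_T|\,\|\hat\phi\|^p_{L^p(\hat T)}$; the surface identity $\|\phi\|^p_{L^p(e)}=(h_e/|\hat e|)\,\|\hat\phi\|^p_{L^p(\hat e)}$ (the affine map has constant speed $h_e/|\hat e|$ along the edge); and, from $\hat\nabla\hat\phi=B_T^{\ssT}(\nabla\phi\circ F_T)$, the gradient bound $\|\hat\nabla\hat\phi\|^p_{L^p(\hat T)}\le \|B_T\|^p\,|\det B_T|^{-1}\,\|\nabla\phi\|^p_{L^p(T)}$.

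Substituting these identities into the reference inequality and multiplying through by $h_e/|\hat e|$ gives
\[
\|\phi\|^p_{L^p(e)}\;\lesssim\;\frac{h_e}{|\det B_T|}\,\|\phi\|^p_{L^p(T)}+\frac{h_e\,\|B_T\|^p}{|\det B_T|}\,\|\nabla\phi\|^p_{L^p(T)}.
\]
To finish, I would invoke shape-regularity of $\cT_h$, which yields $\|B_T\|\lesssim h_T$, $|\det B_T|\approx h_T^2$, and $h_e\approx h_T$. The first coefficient then behaves like $h_e/h_e^2=h_e^{-1}$, while the second behaves like $h_e\cdot h_e^{p}/h_e^2=h_e^{p-1}=h_e^{-1}h_e^{p}$, producing exactly
\[
\|\phi\|^p_{L^p(e)}\;\lesssim\;h_e^{-1}\Big(\|\phi\|^p_{L^p(T)}+h_e^p\|\nabla\phi\|^p_{L^p(T)}\Big),
\]
with a constant depending only on $p$, $\hat T$, and the shape-regularity parameter.

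The step most deserving of care is the bookkeeping of the scaling constants together with the verification that the reference trace constant $C_{\hat T}$ can be taken uniform over the range $1\le p<\infty$; the trace step itself is classical. As an alternative that sidesteps the reference-element machinery, one may argue directly on $T$: applying the divergence theorem to the field $|\phi|^p(x-x_0)$, where $x_0$ is the vertex opposite to $e$, annihilates the contributions from the other two edges (there $x-x_0$ is tangential) and produces $ch_e\|\phi\|^p_{L^p(e)}\le 2\|\phi\|^p_{L^p(T)}+p\,h_e\|\phi\|^{p-1}_{L^p(T)}\|\nabla\phi\|_{L^p(T)}$ after H\"older's inequality, and a weighted Young inequality on the last term then yields the same conclusion. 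I would most likely present the scaling proof, as it matches the references \cite{scott1990finite, ciarlet2002finite}.
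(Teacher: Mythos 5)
The paper does not actually prove this lemma: it is quoted as a standard scaled trace inequality from the cited references (Brenner--Scott, Ciarlet), and those references establish it by exactly the reference-element scaling argument you give. Your proof is correct: the reference trace inequality, the three scaling identities, and the shape-regularity bounds $\|B_T\|\lesssim h_T$, $|\det B_T|\approx h_T^2$, $h_e\approx h_T$ combine precisely as you compute, and your reading of the statement (the typo $L^p(s)$ should be $L^p(T)$, with $\phi\in W^{1,p}(T)$ implicitly required) is the intended one; the constant may depend on $p$, which is harmless since $p$ is fixed in every application in the paper. Your alternative divergence-theorem argument with the field $|\phi|^p(x-x_0)$ is also valid and is arguably cleaner, since it avoids the reference element entirely and makes the dependence on shape regularity explicit through the height $2|T|/h_e$ of $T$ over $e$.
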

		\vspace{0.3 cm}
		\item  {\large{\textbf{Inverse inequalities}}}~\cite{scott1990finite, ciarlet2002finite}
		\begin{lemma} \label{inverse_inequality}
			Let $1 \leq s,t \leq \infty$ and $\b{{v}_h} \in  \b{V^h_{C R}}$. Then, it holds that
			\begin{enumerate}
				\item $	\|\b{{v}_h}\|_{{W}^{m,s}(T)} \lesssim h^{l-m}_T h_T^{2(\frac{1}{s}-\frac{1}{t})} \|\b{{v}_h}\|_{{W}^{l,t}(T)} \quad \forall~T\in \cT_h$, ~$l \leq m$,
				\item $	\|\b{{v}_h}\|_{{L}^{\infty}(T)} \lesssim h^{-1}_T \|\b{{v}_h}\|_{{L}^2(T)} \quad \forall~ T \in \cT_h$,
				\item $	\|\b{{v}_h}\|_{{L}^{\infty}(e)} \lesssim h^{-\frac{1}{2}}_e \|\b{{v}_h}\|_{{L}^2(e)} \quad \forall~e \in \cE_h$.
			\end{enumerate}	
		\end{lemma}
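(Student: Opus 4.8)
The plan is to prove all three estimates by the classical scaling argument. Since every $\b{v_h}\in\b{V^h_{CR}}$ restricts on each $T\in\cT_h$ to a vector of polynomials of degree at most one, and since each of the three inequalities is purely local (it involves only a single element $T$, or a single edge $e$ of one element), the discontinuity of Crouzeix--Raviart functions across interelement boundaries plays no role whatsoever. It therefore suffices to work component-wise with scalar polynomials $v\in\mathbb{P}_1(T)$ and to transport every quantity to a fixed reference element.

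First I would fix a reference triangle $\hat T$ and, for each $T$, the affine homeomorphism $F_T(\hat{\b{x}})=B_T\hat{\b{x}}+\b{b}_T$ with $F_T(\hat T)=T$, and set $\hat v:=v\circ F_T$. Shape-regularity of $\cT_h$ supplies the standard bounds $\|B_T\|\lesssim h_T$, $\|B_T^{-1}\|\lesssim h_T^{-1}$ and $|\det B_T|\approx h_T^{2}$ in two dimensions. Differentiating $v=\hat v\circ F_T^{-1}$ and changing variables in the integrals then yields, up to shape-regularity constants and in both directions, the seminorm scaling relation $|v|_{W^{k,p}(T)}\approx h_T^{-k}\,|\det B_T|^{1/p}\,|\hat v|_{W^{k,p}(\hat T)}$, valid for every order $k$ and exponent $p$.

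Next I would invoke norm equivalence on the finite-dimensional space $\mathbb{P}_1(\hat T)$: because $m\ge l$, we have $\|\hat v\|_{W^{m,s}(\hat T)}\lesssim\|\hat v\|_{W^{l,t}(\hat T)}$, all norms on this fixed finite-dimensional space being comparable. Transporting this inequality back to $T$ through the seminorm scaling relation produces exactly the factor $h_T^{l-m}$ — arising from the power $h_T^{-m}$ in the forward direction and $h_T^{l}$ in the backward direction — together with $h_T^{2(1/s-1/t)}$, coming from the mismatch between $|\det B_T|^{1/s}$ and $|\det B_T|^{1/t}$; this establishes part (1). Part (2) then follows immediately by specializing $l=m=0$, $s=\infty$, $t=2$, for which $h_T^{2(1/\infty-1/2)}=h_T^{-1}$.

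Finally, for part (3) the identical scaling is performed one dimension lower: the restriction of $v$ to an edge $e$ is a univariate polynomial of degree at most one, and mapping $e$ onto a reference edge $\hat e$ of unit length gives $\|v\|_{L^\infty(e)}\approx\|\hat v\|_{L^\infty(\hat e)}$ and $\|v\|_{L^2(e)}\approx h_e^{1/2}\|\hat v\|_{L^2(\hat e)}$; combining these with the one-dimensional norm equivalence $\|\hat v\|_{L^\infty(\hat e)}\lesssim\|\hat v\|_{L^2(\hat e)}$ on $\mathbb{P}_1(\hat e)$ produces the claimed factor $h_e^{-1/2}$. The only point demanding care — and the nearest thing to an obstacle — is the bookkeeping of the affine-map constants and the correct powers of $|\det B_T|$ and of $h_T$ when reassembling the full $W^{m,s}$ norm out of seminorms of several orders; once the seminorm scaling relation is in place and shape-regularity is invoked, the remainder is entirely routine and reproduces the results of \cite{scott1990finite, ciarlet2002finite}.
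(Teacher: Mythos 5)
The paper offers no proof of this lemma; it simply cites \cite{scott1990finite, ciarlet2002finite}, where the result is established by precisely the affine scaling argument you describe. Your proposal is correct and matches that standard route (map to a reference element, equivalence of norms on the finite-dimensional space $\mathbb{P}_1$, transport back using shape-regularity), the only caveat being that the sharp factor $h_T^{l-m}$ in part (1) genuinely requires the order-by-order seminorm bookkeeping you already flag, since naive full-norm scaling in the backward direction only yields $h_T^{-m}$.
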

		\vspace{0.3 cm}
	\end{itemize}
	\par 
	\noindent 
		Next, we will define the following interpolation operator which will play a crucial role in the upcoming analysis. Define the interpolation operator $\mathbf{I}^{\b{CR}}: \b{H^1}(\Omega) \longrightarrow \b{V^h_{CR}}$ by 
	\begin{align*}
		\mathbf{I}^{\b{CR}}(\b{v}) : =\sum_{e\in \mathcal{E}_h}\bigg(\int_e \frac{1}{h_e}\b{v}~d\sigma\bigg) \psi_e,
	\end{align*}
	where $\psi_e$ refers to the edge oriented basis function for the space $\b{V^h_{CR}}$ which satisfies the following properties:
	\par 
	\begin{enumerate}
		\item $\psi_e$ should be equal to one along edge $e$.
		\item $\psi_e(m_{e^{'}})=0 $ for any other edge $e'\in \mathcal{E}^h~ \backslash~e.$  
		\item Support of $\psi_e$ is contained in $\mathcal{T}_e.$
	\end{enumerate}
	The crucial property of this map is that it commutes with the time derivative, that is 
	\begin{align*}
		\frac{\partial}{\partial t}\mathbf{I}^{\b{CR}}(\b{v}) &= \frac{\partial}{\partial t}\sum_{e\in \mathcal{E}_h}\bigg( \frac{1}{h_e}\int_e\b{v}~d{\sigma}\bigg) \psi_e \\
		&= \sum_{e\in \mathcal{E}_h}\bigg( \frac{1}{h_e}\int_e\frac{\partial \b{v}}{\partial t}~d{\sigma}\bigg) \psi_e \\
		&= \mathbf{I}^{\b{CR}}\bigg(\frac{\partial \b{v}}{\partial t}\bigg).
	\end{align*}
	Next, we will see the approximation properties of the interpolation map $\mathbf{I}^{\b{CR}}$ in the next lemma \cite{carstensen2017nonconforming, ciarlet2002finite}.
	\begin{lemma}\label{Interpolation1}
		For any $\b{v} \in \b{H^\nu}(T),  T \in \mathcal{T}_h$,  the following holds
		\begin{align*}
			|\b{v}-\mathbf{I}^{\b{CR}}(\b{v})|_{\b{H^l}(T)} \lesssim  h_T^{\nu-l}|\b{v}|_{\b{H^{\nu}}(T)}~~~~\text{for}~~0\leq l\leq \nu,
		\end{align*}
		where $~1\leq\nu\leq2$.
	\end{lemma}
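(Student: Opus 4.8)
The plan is to establish the interpolation error estimate $|\b{v}-\mathbf{I}^{\b{CR}}(\b{v})|_{\b{H^l}(T)} \lesssim h_T^{\nu-l}|\b{v}|_{\b{H^{\nu}}(T)}$ via the standard Bramble--Hilbert argument combined with a scaling to the reference element. First I would verify that $\mathbf{I}^{\b{CR}}$ reproduces polynomials of degree $\leq 1$ exactly, that is, $\mathbf{I}^{\b{CR}}(\b{p}) = \b{p}$ for all $\b{p}\in [\bbP_1(T)]^2$. This is the key algebraic fact and follows from the defining property that the degrees of freedom are the edge-averages $\frac{1}{h_e}\int_e \b{v}\,d\sigma$ together with the fact that the Crouzeix--Raviart basis functions $\psi_e$ form a dual basis for these functionals on $\bbP_1$; since any affine function is uniquely determined on $T$ by its three edge-averages, the interpolant of an affine function coincides with the function itself.

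Next I would pass to the reference triangle $\hat{T}$ via the standard affine map $F_T:\hat{T}\to T$, $F_T(\hat{\b{x}}) = B_T\hat{\b{x}}+\b{b}_T$. Writing $\hat{\b{v}} = \b{v}\circ F_T$, the crucial point is that $\mathbf{I}^{\b{CR}}$ commutes with this pullback: because the degrees of freedom are edge-averages (which are affine-invariant up to the Jacobian factors that cancel in the averaging), one has $\widehat{\mathbf{I}^{\b{CR}}\b{v}} = \hat{\mathbf{I}}^{\b{CR}}\hat{\b{v}}$, where $\hat{\mathbf{I}}^{\b{CR}}$ is the interpolation operator on $\hat{T}$. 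On the fixed reference element, the operator $\hat{\mathbf{I}}^{\b{CR}}:\b{H^\nu}(\hat{T})\to [\bbP_1(\hat{T})]^2$ is a bounded linear map (boundedness follows from the continuity of the edge-average functionals on $\b{H^\nu}(\hat{T})$ for $\nu\geq 1$, using the trace theorem) that fixes $[\bbP_1(\hat{T})]^2$. Hence $\hat{\b{v}}-\hat{\mathbf{I}}^{\b{CR}}\hat{\b{v}}$ annihilates polynomials of degree $\leq 1$, so the Bramble--Hilbert lemma yields
\begin{align*}
	|\hat{\b{v}}-\hat{\mathbf{I}}^{\b{CR}}\hat{\b{v}}|_{\b{H^l}(\hat{T})} \lesssim |\hat{\b{v}}|_{\b{H^{\nu}}(\hat{T})}
\end{align*}
for $0\leq l\leq \nu$ and $1\leq\nu\leq 2$.

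Finally I would transport this estimate back to $T$ using the standard scaling bounds for seminorms under the affine map, namely $|\hat{\b{w}}|_{\b{H^l}(\hat{T})}\lesssim \|B_T\|^l |\det B_T|^{-1/2}|\b{w}|_{\b{H^l}(T)}$ and the analogous inverse bound for $|\b{w}|_{\b{H^{\nu}}(T)}$, where the shape-regularity of $\cT_h$ gives $\|B_T\|\lesssim h_T$ and $\|B_T^{-1}\|\lesssim h_T^{-1}$. Combining the three scaling relations with the reference estimate, the determinant factors cancel and the powers of $h_T$ assemble into the claimed $h_T^{\nu-l}$.

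The main obstacle I anticipate is justifying the reference-element boundedness and polynomial-reproduction of $\hat{\mathbf{I}}^{\b{CR}}$ cleanly for the full range $1\leq\nu\leq 2$: one must confirm that the edge-average degrees of freedom are well-defined and continuous on $\b{H^\nu}(\hat{T})$ even at the low end $\nu=1$ (which requires the trace of an $H^1$ function to be integrable on edges, guaranteed by the trace inequality in Lemma~\ref{discrete_trace}), and that the commutation of $\mathbf{I}^{\b{CR}}$ with the affine pullback holds exactly. Everything else is routine scaling bookkeeping.
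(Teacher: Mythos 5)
The paper does not prove this lemma at all; it simply cites \cite{carstensen2017nonconforming, ciarlet2002finite}, so there is no in-paper argument to compare against. Your proof --- $\bbP_1$-reproduction of the edge-average degrees of freedom, commutation with the affine pullback, Bramble--Hilbert on the reference triangle, and scaling back using shape-regularity --- is precisely the standard argument found in those references, and it is correct, including your attention to the continuity of the edge-average functionals on $\b{H^1}(\hat{T})$ at the low end $\nu=1$ (where one only needs invariance on constants for the Deny--Lions step). The only cosmetic gap is the non-integer range $1<\nu<2$, which, if intended, would be handled by interpolating between the two integer cases.
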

	Note that for the sake of notational convenience, we denote $\mathbf{I}^{\b{CR}}(\b{u}(t))$ by $\b{u_I}(t)$ in the upcoming sections. 
	\par 
	\noindent
	In this analysis, we introduce and examine two types of approximation schemes. When solely the spatial variables are discretized, we obtain semi-discrete schemes. On the other hand, when both the spatial and temporal variables are discretized, we arrive at fully-discrete schemes. For both the schemes, we aim to provide rigorous {\emph{a priori}} error analysis and achieve optimal order of convergence under additional assumption on solution's regularity.
	\section{Semi-Discrete Approximation}\label{ch5sec4}
	The spatially semi-discrete formulation for the variational inequality \eqref{paper5_1.9}-\eqref{paper5_1.10} is given by
	
	\textbf{Problem 1}~(\textit{Semi-Discrete Variational Inequality})\textit{ Find $\b{u_h}:\mathcal{I} \rightarrow \b{\mathcal{K}^h_{C R}}$ satisfying
		\begin{align}\label{paper5_1.99}
			a_h(\bm{u_h}(t), \bm{v_h} - \bm{\dot{u}_h}(t))+j(\bm{v_h}) - j(\bm{\dot{u}_h}(t)) &\geq (\bm{l}(t), \bm{v_h} -\bm{\dot{u}_h}(t)) \quad \forall ~\bm{v_h} \in \b{\mathcal{K}^h_{C R}},\\
			\bm{u_h}(0) =\mathbf{I}^{\b{CR}}\bm{u_o}. \label{paper5_1.110}
	\end{align}}
	Here,  the discrete bilinear form $a_h: \b{V^h_{C R}} \times \b{V^h_{C R}} \rightarrow \mathbb{R}$ is defined as 
	\begin{align*}
		a_h(\bm{v_h, w_h})&:=\sum_{T \in \mathcal{T}_h}\int\limits_T \bm{\sigma} (\bm{v_h})\colon \bm{\epsilon}(\bm{w_h})~dx + \sum_{e \in \mathcal{E}^0_h}2\rho\mu\int\limits_e \frac{1}{h_e} \sjump{\b{v_h}}:\sjump{\b{w_h}}~d{\sigma}.
	\end{align*}
	Note that the parameter $\rho >0$ is introduced in the discrete bilnear form $a_h(\cdot, \cdot)$ in order to stabalize it \cite{hansbo2003discontinuous}. Next, we define the norm $\b{\norm{ \cdot }}$ on the space $\b{V^h_{CR}}+\b{V}\cap [H^2(\Omega)]^2$ using the following relations
	\begin{align*}
		\norm{\b{v}}^2:=\mid \b{v}\mid^2_h+\mid \b{v}\mid_*^2,
	\end{align*}
	where
	\begin{align*}
		\mid \b{v} \mid^2_h := \sum_{T\in\mathcal{T}_h} \mid \b{v}\mid_T^2,
		~~~~~~~~~~~~
		\mid \b{v} \mid^2_* := \sum_{e\in\mathcal{E}_h^o}\frac{2\rho\mu}{h_e}\| \sjump{\b{v}}\|^2_{\b{L^2(e)}},
	\end{align*}
	with
	\begin{align*}
		\mid \b{v} \mid^2_T := \int\limits_{T} \b{\sigma}(\b{v}):\b{\epsilon}(\b{v})~dx,~~~~~~~
		\| \sjump{\b{v}}\|^2_{\b{L^2(e)}}:= \int\limits_e\sjump{\b{v}}:\sjump{\b{v}}~d{\sigma}.
	\end{align*}
	Note that the bilinear form $a_h(\cdot,\cdot)$ is continuous and coercive with respect to the norm $\norm{\cdot}$. Thus, the unique solvability of the Problem 1 follows from Stampacchia theorem \cite{ciarlet2002finite}. 
	\subsection {{{Error Analysis for Spatially Semi-Discrete Approximation}}}
	\par 
	\noindent
	\vspace{0.3 cm}
	\\ 
	In the next segment, we aim to derive ${\emph{a priori}}$ error estimates for the semi-discrete variational inequality \eqref{paper5_1.99}-\eqref{paper5_1.110} taking into the account appropriate regularity assumptions on the exact solution $\b{u}(t)$. The frictional contact condition results in a singular behavior of $\b{u(t)}$ near the free boundary around $\Gamma_C$, even when the forces $\b{f}(t)$ and $\b{g}(t)$ exhibit sufficient regularity. Keeping this in mind, it is realistic to assume that $\b{u}(t)$ and $\b{\dot}{\b{u}}(t) $ belongs to $\b{L^{\infty}}[\mathcal{I};\b{H^2}(\Omega)].$ The following theorem guarantees optimal order {\emph{a priori}} error estimates for the quasi-static contact problem. Therein, the error is measured in $\b{L^{\infty}}(\mathcal{I};\b{V_h})$ norm and it is shown to deacy with order $\mathcal{O}(h).$
	\begin{theorem}
		Let $\b{u}(t)$ be the solution of continuous problem \eqref{paper5_1.9}-\eqref{paper5_1.10} and $\b{u_h}(t)$ be the solution of semi-discrete problem \eqref{paper5_1.99}-\eqref{paper5_1.110}. Assume $\b{u}(t),\b{\dot}{\b{u}}(t) \in \b{L^{\infty}}[\mathcal{I};\b{H^2}(\Omega)]$ for any $t \in \mathcal{I}$. Then, we have 
		\begin{align*}
			\norm{\b{u}(t) -\b{u_h}(t)}_{\b{L^{\infty}}(\mathcal{I};\b{V_h})} \lesssim h
		\end{align*}
		for any $t \in [0,T].$
		\vspace{-0.2 cm}
		\begin{proof}
			Let $\b{e}(t)=\b{u}(t) - \b{u_h}(t) = \b{\theta}(t) +\b{\eta}(t)$ where $\b{\theta}(t) := \b{u}(t) -\b{u_I}(t)$ and $\b{\eta}(t) :=\b{u_I}(t) - \b{u_h}(t)$.
			Now, employing the triangle inequality we obtain
			\begin{align*}
				\norm{\b{e}(t)} \leq \norm{\b{\theta}(t)}+\norm{\b{\eta}(t)}.
			\end{align*} 
			First we will derive an upper bound for $\norm{\b{\theta}(t)}$ using Lemma \ref{Interpolation1}  as follows
			\begin{align*}
				\norm{\b{\theta}(t)}^2 = \sum_{T \in \mathcal{T}_h} |\b{u}(t)-\b{u_I}(t)|^2_{\b{H^1}(T)} \leq  \sum_{T \in \mathcal{T}_h} h^2_T |\b{u}(t)|^2_{\b{H^2}(T)}.
			\end{align*}
			Next we need to bound $\norm{\b{u_I}(t) - \b{u_h}(t)}$. To this end, we have
			\begin{align}\label{paper5_2}
				\frac{1}{2}\frac{\partial}{\partial{t}}\norm{\b{u_I}(t) - \b{u_h}(t)}^2 &= 	\frac{1}{2}\frac{\partial}{\partial{t}} \bigg( \sum_{T \in \mathcal{T}_h} \int\limits_T \b{\sigma}(\b{u_I}(t) -\b{u_h}(t)) : \b{\epsilon}(\b{u_I}(t) -\b{u_h}(t))~dx \nonumber\\ &+ \sum_{e\in \mathcal{E}^0_h} \frac{2\rho\mu}{h_e} \int\limits_e \sjump{\b{u_I}(t)-\b{u_h}(t)}:\sjump{\b{u_I}(t)-\b{u_h}(t)}~d{\sigma} \bigg) \nonumber\\
				&= a_h(\b{u_I}(t)-\b{u_h}(t), \b{\dot}{\b{u_I}}(t) -\b{\dot}{\b{u_h}}(t)) \notag\\
				& =a_h(\b{u_I}(t)-\b{u}(t), \b{\dot}{\b{u_I}}(t) -\b{\dot}{\b{u_h}}(t)) + a_h(\b{u}(t), \b{\dot}{\b{u_I}}(t) -\b{\dot}{\b{u_h}}(t)) \nonumber \\&- a_h(\b{u_h}(t), \b{\dot}{\b{u_I}}(t) -\b{\dot}{\b{u_h}}(t)). 
			\end{align}
			Using the fact that $ \b{\dot}{\b{u_I}}(t) \in \b{\mathcal{K}^h_{CR}}$, we find 
			\begin{align*}
				a_h(\bm{u_h}(t),\b{\dot}{\b{u_I}}(t)- \bm{\dot u_h}(t))+j( \b{\dot}{\b{u_I}}(t)) - j(\bm{\dot{u_h}}(t)) &\geq (\bm{l}(t),  \b{\dot}{\b{u_I}}(t) -\bm{\dot{u_h}}(t)).
			\end{align*}
			Thus, 
			\begin{align}\label{paper5_1}
				-a_h(\bm{u_h}(t),\b{\dot}{\b{u_I}}(t)- \bm{\dot u_h}(t)) \leq j( \b{\dot}{\b{u_I}}(t)) - j(\bm{\dot{u_h}}(t)) - (\bm{l}(t),  \b{\dot}{\b{u_I}}(t) -\bm{\dot{u_h}}(t)).
			\end{align}
			Inserting equation $\eqref{paper5_1}$  in the relation $\eqref{paper5_2}$, we obtain
			\begin{align}\label{paper5_4}
				\frac{1}{2}\frac{\partial}{\partial{t}}\norm{\b{u_I}(t) - \b{u_h}(t)}^2 &\leq a_h(\b{u_I}(t)-\b{u}(t), \b{\dot}{\b{u_I}}(t) -\b{\dot}{\b{u_h}}(t)) + a_h(\b{u}(t), \b{\dot}{\b{u_I}}(t) -\b{\dot}{\b{u_h}}(t))  +  j( \b{\dot}{\b{u_I}}(t)) \nonumber \\& - j(\bm{\dot{u_h}}(t))- (\bm{l}(t),  \b{\dot}{\b{u_I}}(t) -\bm{\dot{u_h}}(t)).
			\end{align}
			Applying integration by parts to the second term of the last relation, we obtain
			\begin{align}\label{paper5_3}
				a_h(\b{u}(t), \b{\dot}{\b{u_I}}(t) -\b{\dot}{\b{u_h}}(t)) &=  \sum_{T \in \mathcal{T}_h} \int\limits_T \b{\sigma}(\b{u}(t)) : \b{\epsilon}(\b{\dot}{\b{u_I}}(t) -\b{\dot}{\b{u_h}}(t))~dx \nonumber\\
				&= \sum_{T\in \mathcal{T}_h} \int\limits_{T} \b{f}(t)\cdot (\b{\dot}{\b{u_I}}(t) -\b{\dot}{\b{u_h}}(t))~dx +\sum_{e\in \mathcal{E}^N_h} \int\limits_{e} \b{g}(t)\cdot (\b{\dot}{\b{u_I}}(t) -\b{\dot}{\b{u_h}}(t))~d{\sigma} \nonumber\\&+ \sum_{e\in \mathcal{E}^C_h} \int\limits_{e} (\b{\sigma}(\b{u}(t))\b{n}) \cdot (\b{\dot}{\b{u_I}}(t) -\b{\dot}{\b{u_h}}(t))~d{\sigma} \nonumber\\&+ \sum_{e\in \mathcal{E}^0_h} \int\limits_{e} \smean{\b{\sigma}(\b{u}(t))}: \sjump{\b{\dot}{\b{u_I}}(t) -\b{\dot}{\b{u_h}}(t)}~d{\sigma}.
			\end{align}	
			Clubbing equation \eqref{paper5_3} in equation \eqref{paper5_4}, we find
			\begin{align*}
				\frac{1}{2}\frac{\partial}{\partial t}\norm{\b{u_I}(t) - \b{u_h}(t)}^2 &\leq a_h(\b{u_I}(t)-\b{u}(t), \b{\dot}{\b{u_I}}(t) -\b{\dot}{\b{u_h}}(t)) \nonumber \\&+  j( \b{\dot}{\b{u_I}}(t)) - j(\bm{\dot{u_h}}(t)) + \sum_{e\in \mathcal{E}^C_h} \int\limits_{e} (\b{\sigma}(\b{u}(t))\b{n}) \cdot (\b{\dot}{\b{u_I}}(t) -\b{\dot}{\b{u_h}}(t))~d{\sigma} \nonumber \\&+ \sum_{e\in \mathcal{E}^0_h} \int\limits_{e} \smean{\b{\sigma}(\b{u}(t))}: \sjump{\b{\dot}{\b{u_I}}(t) -\b{\dot}{\b{u_h}}(t)}~d{\sigma}.
			\end{align*}
			Integrating the last inequality from $0$ to $t$, we obtain 
			\begin{align*}
				\norm{\b{u_I}(t) - \b{u_h}(t)}^2 &\leq \norm{\b{u_I}(0) - \b{u_h}(0)}^2 + 2\int\limits_{0}^{t} a_h(\b{u_I}(s)-\b{u}(s), \b{\dot}{\b{u_I}}(s) -\b{\dot}{\b{u_h}}(s))~ds\\& +2\int\limits_{0}^{t} \big(j( \b{\dot}{\b{u_I}}(s)) - j(\bm{\dot{u_h}}(s)) \big)~ds + 2\int\limits_{0}^{t}  \sum_{e\in \mathcal{E}^C_h} \int\limits_{e} (\b{\sigma}(\b{u}(s))\b{n}) \cdot (\b{\dot}{\b{u_I}}(s) -\b{\dot}{\b{u_h}}(s))~d\sigma ds\\&+ 2\int\limits_{0}^{t} \sum_{e\in \mathcal{E}^0_h} \int\limits_{e} \smean{\b{\sigma}(\b{u}(s))}: \sjump{\b{\dot}{\b{u_I}}(s) -\b{\dot}{\b{u_h}}(s)}~d\sigma ds.
			\end{align*}
			Exploiting the fact that $\b{u_I}(0) = \b{u_h}(0)$ and $\sigma_n(\b{u}(t)) =0$ on $\Gamma_C$, we have
			\begin{align}\label{paper5_5}
				\norm{\b{u_I}(t) - \b{u_h}(t)}^2 &\leq 2\int\limits_{0}^{t} a_h(\b{u_I}(s)-\b{u}(s), \b{\dot}{\b{u_I}}(s) -\b{\dot}{\b{u_h}}(s))~ds  \nonumber \\&+ 2\int\limits_{0}^{t}  \sum_{e\in \mathcal{E}^C_h} \int\limits_{e} \b{\sigma}_{\tau}(\b{u}(s)) \cdot (\b{\dot}{\b{u_I}}(s) -\b{\dot}{\b{u_h}}(s))_{\tau}~d{\sigma} ds \nonumber +2\int\limits_{0}^{t}\big( j( \b{\dot}{\b{u_I}}(s)) - j(\bm{\dot{u_h}}(s))\big)~ds  \nonumber \\&+ 2\int\limits_{0}^{t} \sum_{e\in \mathcal{E}^0_h} \int\limits_{e} \smean{\b{\sigma}(\b{u}(s))}: \sjump{\b{\dot}{\b{u_I}}(s) -\b{\dot}{\b{u_h}}(s)}~d\sigma ds  \nonumber\\
				&= A_1 +A_2+A_3,
			\end{align}
			where
			\begin{align*}
				A_1 &= 2\int\limits_{0}^{t} a_h(\b{u_I}(s)-\b{u}(s), \b{\dot}{\b{u_I}}(s) -\b{\dot}{\b{u_h}}(s))~ds,\\
				A_2 &= 2\int\limits_{0}^{t}  \sum_{e\in \mathcal{E}^C_h} \int\limits_{e} \b{\sigma}_{\tau}(\b{u}(s)) \cdot (\b{\dot}{\b{u_I}}(s) -\b{\dot}{\b{u_h}}(s))_{\tau}~d{\sigma} ds +2\int\limits_{0}^{t}\big( j( \b{\dot}{\b{u_I}}(s)) - j(\bm{\dot{u_h}}(s))\big)~ds,  \\
				A_3 &=  2\int\limits_{0}^{t} \sum_{e\in \mathcal{E}^0_h} \int\limits_{e} \smean{\b{\sigma}(\b{u}(s))}: \sjump{\b{\dot}{\b{u_I}}(s) -\b{\dot}{\b{u_h}}(s)}~d\sigma ds.
			\end{align*}
			We will now address each term individually. To establish an upper bound for $A_1$, we conduct integration by parts with respect to time as follows:
			\begin{align*}
				A_1 &=  2\int\limits_{0}^{t} a_h(\b{u_I}(s)-\b{u}(s), \b{\dot}{\b{u_I}}(s) -\b{\dot}{\b{u_h}}(s))~ds  \nonumber \\
				&= 2  \sum_{T \in \mathcal{T}_h} \int\limits_T \b{\sigma}(\b{u_I}(t)-\b{u}(t)) : \b{\epsilon}({\b{u_I}}(t) -{\b{u_h}}(t))~dx \\&- 2 \sum_{T \in \mathcal{T}_h} \int\limits_T \b{\sigma}(\b{u_I}(0)-\b{u}(0)) : \b{\epsilon}({\b{u_I}}(0) -{\b{u_h}}(0))~dx\\
				&-2\int\limits_{0}^{t}  \sum_{T \in \mathcal{T}_h} \int\limits_T \b{\sigma}(\b{\dot}{\b{u_I}}(s)-\b{\dot}{\b{u}}(s)) : \b{\epsilon}({\b{u_I}}(s) -{\b{u_h}}(s))~dxds \\
				&\lesssim \sum_{T \in \mathcal{T}_h}|\b{u_I}(t) -\b{u}(t)|_{\b{H^1}(T)}|\b{u_I}(t) -\b{u_h}(t)|_{\b{H^1}(T)} +  \sum_{T \in \mathcal{T}_h}|\b{u_I}(0) -\b{u}(0)|_{\b{H^1}(T)}|\b{u_I}(0) -\b{u_h}(0)|_{\b{H^1}(T)} \\
				& + \int\limits_{0}^{t} \sum_{T \in \mathcal{T}_h}|\b{\dot}{\b{u_I}}(s)-\b{\dot}{\b{u}}(s)|_{\b{H^1}(T)}|\b{u_I}(s) -{\b{u_h}}(s)|_{\b{H^1}(T)}~ds.
			\end{align*}
			A use of Young's inequality in the last relation yields
			\begin{align*}
				A_1 
				&\leq \dfrac{1}{\epsilon}\sum_{T \in \mathcal{T}_h}|\b{u_I}(t) -\b{u}(t)|^2_{\b{H^1}(T)} + \epsilon \sum_{T \in \mathcal{T}_h}|\b{u_I}(t) -\b{u_h}(t)|^2_{\b{H^1}(T)} + \dfrac{1}{\epsilon} \sum_{T \in \mathcal{T}_h}|\b{u_I}(0) -\b{u}(0)|^2_{\b{H^1}(T)}  \nonumber \\&+ \epsilon\sum_{T \in \mathcal{T}_h}|\b{u_I}(0) -\b{u_h}(0)|^2_{\b{H^1}(T)} 
				+ \int\limits_{0}^{t} \dfrac{1}{\epsilon} \sum_{T \in \mathcal{T}_h}|\b{\dot}{\b{u_I}}(s)-\b{\dot}{\b{u}}(s)|^2_{\b{H^1}(T)}~ds \\&+  \int\limits_{0}^{t}{\epsilon} \sum_{T \in \mathcal{T}_h} |\b{u_I}(s) -{\b{u_h}}(s)|^2_{\b{H^1}(T)}~ds
			\end{align*}
			for some $\epsilon>0$. Using Lemma \ref{Interpolation1}, we obtain
			\begin{align*}
				A_1 
				&\leq \dfrac{1}{\epsilon}\sum_{T \in \mathcal{T}_h} h_T^2 |\b{u}(t)|^2_{\b{H^2}(T)}+ \epsilon \sum_{T \in \mathcal{T}_h}|\b{u_I}(t) -\b{u_h}(t)|^2_{\b{H^1}(T)} + \dfrac{1}{\epsilon} \sum_{T \in \mathcal{T}_h}|\b{u_I}(0) -\b{u}(0)|^2_{\b{H^1}(T)} \\&+ \epsilon\sum_{T \in \mathcal{T}_h}|\b{u_I}(0) -\b{u_h}(0)|^2_{\b{H^1}(T)} 
				+ \int\limits_{0}^{t} \dfrac{1}{\epsilon} \sum_{T \in \mathcal{T}_h}h_T^2|\b{\dot}{\b{u}}(s)|^2_{\b{H^2}(T)}~ds+  \int\limits_{0}^{t}{\epsilon}\norm{\b{u_I}(s) -{\b{u_h}}(s)}^2~ds \\
				&\leq \dfrac{1}{\epsilon} h^2 \|\b{u}\|^2_{\b{L^{\infty}}(\mathcal{I}; \b{H^2}(\O))}+ \epsilon \norm{\b{u_I} -\b{u_h}}^2_{\b{L^{\infty}}(\mathcal{I}; \b{V_h})} + \dfrac{1}{\epsilon} \sum_{T \in \mathcal{T}_h}|\b{u_I}(0) -\b{u}(0)|^2_{\b{H^1}(T)} \\&+ \epsilon\sum_{T \in \mathcal{T}_h}|\b{u_I}(0) -\b{u_h}(0)|^2_{\b{H^1}(T)} 
				+ \dfrac{1}{\epsilon}T h^2\|\b{\dot}{\b{u}}\|^2_{\b{L^{\infty}}(\mathcal{I}; \b{H^2}(\O))}+  {\epsilon}T\norm{\b{u_I} -{\b{u_h}}}^2_{\b{L^{\infty}}(\mathcal{I};\b{V_h})}.
			\end{align*}
			Next we handle term $A_2$ in the following way
			\begin{align*}
				A_2 &= 2\int\limits_{0}^{t}  \sum_{e\in \mathcal{E}^C_h} \int\limits_{e} \b{\sigma}_{\tau}(\b{u}(s)) \cdot (\b{\dot}{\b{u_I}}(s) -\b{\dot}{\b{u_h}}(s))_{\tau}~d{\sigma} ds + 2\int\limits_{0}^{t} \big(j( \b{\dot}{\b{u_I}}(s)) - j(\bm{\dot{u_h}}(s))\big)~ds  \\
				&\leq 2 \int\limits_{0}^{t}  \sum_{e \in \mathcal{E}^C_h} \int\limits_{e} \b{\sigma}_{\tau}(\b{u}(s))\cdot (\b{\dot}{\b{u_I}}(s))_{\tau}~d\sigma ds + \int\limits_{0}^{t}  \sum_{e \in \mathcal{E}^C_h} \int\limits_{e}  g_a  |(\b{\dot}{\b{u_h}}(s))_\tau|~d\sigma ds \\&+ 2\int\limits_{0}^{t} \sum_{e\in \mathcal{E}^C_h} \int\limits_e \big(g_a  |(\b{\dot}{\b{u_I}}(s))_\tau| -g_a  |(\b{\dot}{\b{u_h}}(s))_\tau|\big)~d\sigma ds \\
				&=  2\int\limits_{0}^{t}  \sum_{e \in \mathcal{E}^C_h} \int\limits_{e} \b{\sigma}_{\tau}(\b{u}(s)) \cdot \b{\dot}({\b{u_I}}(s))_{\tau}~d\sigma ds + 2 \int\limits_{0}^{t}  \sum_{e \in \mathcal{E}^C_h} \int\limits_{e} g_a  |(\b{\dot}{\b{u_I}}(s))_\tau| ~d\sigma ds. \\
			\end{align*}
			Using the fact that for any contact edge $e$, we have $ \b{\sigma}_{\tau}({\b{u}}(t)) \cdot (\b{\dot}{\b{u}}(t))_\tau= -g_a|(\b{\dot}{\b{u}}(t))_\tau|$, we deduce 
			\begin{align*}
				A_2 &= 2\int\limits_{0}^{t}  \sum_{e \in \mathcal{E}^C_h} \int\limits_{e} \b{\sigma}_{\tau}(\b{u}(s)) \cdot (\b{\dot}{\b{u_I}}(s)-\b{\dot}{\b{u}}(s))_{\tau}~d\sigma ds + 2 \int\limits_{0}^{t}  \sum_{e \in \mathcal{E}^C_h} \int\limits_{e} g_a  |(\b{\dot}{\b{u_I}}(s))_\tau| ~d\sigma ds\\
				&-2 \int\limits_{0}^{t}  \sum_{e \in \mathcal{E}^C_h} \int\limits_{e} g_a  |(\b{\dot}{\b{u}}(s))_\tau| ~d\sigma ds \\
				&\leq 4 \int\limits_{0}^{t}  \sum_{e \in \mathcal{E}^C_h} \int\limits_{e} g_a  |(\b{\dot}{\b{u_I}}(s))_\tau - (\b{\dot}{\b{u}}(s))_\tau| ~d\sigma ds \\
				& \lesssim \|g_a\|_{{L^\infty}(\Gamma_C)} \int\limits_{0}^{t} \sum_{e \in \mathcal{E}^C_h}h_e^{\frac{1}{2}} \|\b{\dot}{\b{u_I}}(s) - \b{\dot}{\b{u}}(s)\|_{\b{L^2}(e)}~ds.
			\end{align*}
			By applying trace inequality, we find
			\begin{align*}
				A_2 & \leq 2\|g_a\|_{L^{\infty}(\Gamma_C)} h^2 |\b{\dot{u}}|_{\b{L^{\infty}}(\mathcal{I};\b{H^2}(\Omega))}.
			\end{align*}
			Now,  we will proceed to bound $A_3$. 
			\begin{align}\label{paper5_eqn67}
				A_3 &= 2\int\limits_{0}^{t} \sum_{e\in \mathcal{E}^0_h} \int\limits_{e} \smean{\b{\sigma}(\b{u}(s))}: \sjump{\b{\dot}{\b{u_I}}(s) -\b{\dot}{\b{u_h}}(s)}~d\sigma ds  \nonumber \\
				&= 2\int\limits_{0}^{t} \sum_{e\in \mathcal{E}^0_h} \int\limits_{e} \smean{\b{\sigma}(\b{u}(s)- \b{u_I}(s))}: \sjump{\b{\dot}{\b{u_I}}(s) -\b{\dot}{\b{u_h}}(s)}~d\sigma ds \nonumber \\&+ 2\int\limits_{0}^{t} \sum_{e\in \mathcal{E}^0_h} \int\limits_{e} \smean{\b{\sigma}(\b{u_I}(s))}: \sjump{\b{\dot}{\b{u_I}}(s) -\b{\dot}{\b{u_h}}(s)}~d\sigma ds.
			\end{align}
			First we focus on second term of last equality. Utilizing that $\b{\sigma}(\b{u_I}(t))$ is a constant term on each element, we have 
			\begin{align*}
				2\int\limits_{0}^{t} \sum_{e\in \mathcal{E}^0_h} \int\limits_{e} \smean{\b{\sigma}(\b{u_I})(s)}: \sjump{\b{\dot}{\b{u_I}}(s) -\b{\dot}{\b{u_h}}(s)}~d\sigma ds &=\b{0}
			\end{align*}
			since $\sjump{\b{\dot}{\b{u_I}}(t)-\b{\dot}{\b{u_h}}(t)}(m_e) =\bm{0}$. Performing integration by parts with respect to time in the first term of equality \eqref{paper5_eqn67}, we obtain
			\begin{align}\label{paper5_8}
				2\int\limits_{0}^{t} \sum_{e\in \mathcal{E}^0_h} \int\limits_{e} \smean{\b{\sigma}(\b{u}(s)- \b{u_I}(s))}: \sjump{\b{\dot}{\b{u_I}}(s) &-\b{\dot}{\b{u_h}}(s)}~d\sigma ds \nonumber\\ &= -2\int\limits_{0}^{t} \sum_{e\in \mathcal{E}^0_h} \int\limits_{e} \smean{\b{\sigma}(\b{\dot}{\b{u}}(s)- \b{\dot}{\b{u_I}}(s))}: \sjump{\b{u_I}(s) -{\b{u_h}}(s)}~d\sigma ds \nonumber\\ &+2\sum_{e\in \mathcal{E}^0_h} \int\limits_{e} \smean{\b{\sigma}(\b{u}(t)- \b{u_I}(t))}: \sjump{{\b{u_I}}(t) -{\b{u_h}}(t)}~d{\sigma} \nonumber\\
				&\lesssim  \int\limits_{0}^{t} \sum_{e\in \mathcal{E}^0_h} |\b{\dot}{\b{u}}(s)- \b{\dot}{\b{u_I}}(s)|_{\b{H^1}(e)}\|\sjump{\b{u_I}(s) -{\b{u_h}}(s)}\|_{\b{L^2}(e)}~ds  \nonumber\\
				&+2 \sum_{e\in \mathcal{E}^0_h} |\b{u}(t)- \b{u_I}(t)|_{\b{H^1}(e)}\| \sjump{{\b{u_I}(t)} -{\b{u_h}(t)}}\|_{\b{L^2}(e)}.
			\end{align}
			Employing trace inequality and properties of interpolation operator in relation \eqref{paper5_8}, we find
			\begin{align}\label{paper5_11}
				2\int\limits_{0}^{t} \sum_{e\in \mathcal{E}^0_h} \int\limits_{e} \smean{\b{\sigma}(\b{u}(s)&- \b{u_I}(s))}: \sjump{\b{\dot}{\b{u_I}}(s) -\b{\dot}{\b{u_h}}(s)}~d\sigma ds \nonumber \\& \lesssim  \int\limits_{0}^{t} \sum_{e\in \mathcal{E}^0_h} h_e^{\frac{1}{2}}\|\b{\dot}{\b{u}}(s)\|_{\b{H^2}(\mathcal{T}_e)}\|\sjump{\b{u_I}(s) -{\b{u_h}}(s)}\|_{\b{L^2}(e)}~ds \notag \\&+ \sum_{e\in \mathcal{E}^0_h} h_e^{\frac{1}{2}}\|{\b{u}(t)}\|_{\b{H^2}(\mathcal{T}_e)}\|\sjump{\b{u_I}(t)-{\b{u_h}(t)}}\|_{\b{L^2}(e)}.
			\end{align}
			As $\sjump{\b{\dot}{\b{u_I}}-\b{\dot}{\b{u_h}}}$ becomes zero at the midpoint of every edge $e \in \mathcal{E}_h$,  therefore we use the Poincar$\acute{e}$-Witinger inequality \cite{kesavan1989topics}, followed by the trace inverse inequality to achieve
			\begin{align*}
				2\int\limits_{0}^{t} \sum_{e\in \mathcal{E}^0_h} \int\limits_{e} \smean{\b{\sigma}(\b{u}(s)&- \b{u_I}(s))}: \sjump{\b{\dot}{\b{u_I}}(s) -\b{\dot}{\b{u_h}}(s)}~d\sigma ds\\ & \lesssim  \int\limits_{0}^{t} \sum_{e\in \mathcal{E}^0_h} h_e\|\b{\dot}{\b{u}}(s)\|_{\b{H^2}(\mathcal{T}_e)}|{\b{u_I}(s)-{\b{u_h}}(s)}|_{\b{H^1}(\mathcal{T}_e)} ~ds\\ &+ \sum_{e\in \mathcal{E}^0_h} h_e\|{\b{u}(t)}\|_{\b{H^2}(\mathcal{T}_e)}|\b{u_I}(t)-{\b{u_h}(t)}|_{\b{H^1}(\mathcal{T}_e)}.
			\end{align*}
			Finally, an application of Young's inequality yields
			\begin{align*}
				2\int\limits_{0}^{t} \sum_{e\in \mathcal{E}^0_h} \int\limits_{e} \smean{\b{\sigma}(\b{u}(s)&- \b{u_I}(s))}: \sjump{\b{\dot}{\b{u_I}}(s) -\b{\dot}{\b{u_h}}(s)}~d\sigma ds \\& \lesssim  \int\limits_{0}^{t} \bigg( \dfrac{1}{\epsilon}\sum_{e\in \mathcal{E}^0_h} h_e^2\|\b{\dot}{\b{u}}(s)\|^2_{\b{H^2}(\mathcal{T}_e)}~ds +\epsilon\sum_{e\in \mathcal{E}^0_h} |{\b{u_I}(s)-{\b{u_h}(s)}}|^2_{\b{H^1}(\mathcal{T}_e)} \bigg)~ds\\ &+ \dfrac{1}{\epsilon}\sum_{e\in \mathcal{E}^0_h} h_e^2\|{\b{u}(t)}\|^2_{\b{H^2}(\mathcal{T}_e)} +\epsilon\sum_{e\in \mathcal{E}^0_h} |{\b{u_I}(t)-{\b{u_h}(t)}}|^2_{\b{H^1}(\mathcal{T}_e)} \\
				&\lesssim \frac{1}{\epsilon} h^2_e\|{\b{u}}\|^2_{\b{L^{\infty}}(\mathcal{I};\b{H^2}(\Omega))} + \frac{1}{\epsilon} h^2_e\|\b{\dot}{\b{u}}\|^2_{\b{L^2}(\mathcal{I};\b{H^2}(\Omega))} \\&+ \vspace{0.5 cm} \epsilon \norm{\b{u_I} - \b{u_h}}^2_{\b{L^\infty}(\mathcal{I};\b{V_h})}.
			\end{align*}
			Therefore, by substituting the bounds obtained for $A_1$, $A_2$ and $A_3$ in relation \eqref{paper5_5} and choosing $\epsilon>0$ sufficiently small, we obtain
			\begin{align*}
				\norm{\b{u_I}(t) - \b{u_h}(t)}^2_{\b{L^\infty}(\mathcal{I};\b{V_h})} \lesssim h^2.
			\end{align*}
			Finally, combining the upper bound of $\norm{\b{\theta}(t)}_{\b{L^\infty}(\mathcal{I};\b{V_h})},$ we arrive at the final result.
		\end{proof}
		\vspace{-0.4 cm}
	\end{theorem}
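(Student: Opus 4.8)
The plan is to introduce the Crouzeix--Raviart interpolant $\b{u_I}(t) := \mathbf{I}^{\b{CR}}(\b{u}(t))$ and split the error as $\b{u}(t)-\b{u_h}(t) = \b{\theta}(t) + \b{\eta}(t)$ with $\b{\theta}(t) = \b{u}(t)-\b{u_I}(t)$ and $\b{\eta}(t) = \b{u_I}(t)-\b{u_h}(t)$, estimating the two pieces separately via the triangle inequality. The interpolation part is immediate: from the definition of $\norm{\cdot}$ and Lemma~\ref{Interpolation1} with $\nu=2$, $l=1$, one gets $\norm{\b{\theta}(t)}^2 \lesssim \sum_{T\in\cT_h} h_T^2 |\b{u}(t)|^2_{\b{H^2}(T)} \lesssim h^2 \|\b{u}\|^2_{\b{L^\infty}(\mathcal{I};\b{H^2}(\O))}$, uniformly in $t$. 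Thus the entire difficulty lies in controlling the discrete part $\norm{\b{\eta}(t)}$.

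For $\b{\eta}(t)$ I would run an energy argument in time. Differentiating $\tfrac12\frac{\p}{\p t}\norm{\b{\eta}(t)}^2 = a_h(\b{\eta}(t), \b{\dot\eta}(t))$ and inserting $\pm\,\b{u}(t)$, the crucial move is to test the discrete variational inequality \eqref{paper5_1.99} with $\b{v_h}=\b{\dot u_I}(t)$; this is legitimate precisely because $\mathbf{I}^{\b{CR}}$ commutes with the time derivative, so $\b{\dot u_I}(t)\in\b{\mathcal{K}^h_{CR}}$. This yields a one-sided bound on $-a_h(\b{u_h}(t),\b{\dot u_I}(t)-\b{\dot u_h}(t))$ in terms of $j$ and the load $\b{l}(t)$. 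I would then rewrite $a_h(\b{u}(t),\cdot)$ by integrating by parts elementwise, using the equilibrium equation \eqref{problem3.3} and the traction condition \eqref{problem3.5}; this produces a volume term, a Neumann term, a contact-boundary term on $\cE_h^C$, and an interior/Dirichlet consistency term on $\cE_h^0$ carrying $\smean{\b{\sigma}(\b{u})}$. After integrating from $0$ to $t$ and exploiting $\b{u_I}(0)=\b{u_h}(0)$ together with $\sigma_n(\b{u})=0$ on $\Gamma_C$, the estimate organizes into three contributions $A_1,A_2,A_3$ as in \eqref{paper5_5}.

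The individual bounds proceed as follows. For $A_1$ I would integrate by parts in time to move the derivative onto $\b{\dot u_I}-\b{\dot u}$, apply Cauchy--Schwarz and Young's inequality, and close with Lemma~\ref{Interpolation1}, producing $O(h^2)$ terms in $\|\b{u}\|$ and $\|\b{\dot u}\|$ plus an absorbable $\epsilon\norm{\b{\eta}}^2$. For $A_2$ the key is the Tresca friction law: on each contact edge $\b{\sigma}_\tau(\b{u})\cdot(\b{\dot u})_\tau = -g_a|(\b{\dot u})_\tau|$ and $|\b{\sigma}_\tau(\b{u})|\le g_a$, so the $g_a$--terms involving $\b{\dot u_h}$ cancel against $j(\b{\dot u_h})$ and the remainder collapses to a difference $(\b{\dot u_I}-\b{\dot u})_\tau$, controlled by $\|g_a\|_{L^\infty(\Gamma_C)}$, the trace inequality of Lemma~\ref{discrete_trace}, and interpolation, giving $O(h^2)$. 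For $A_3$ I would first discard the $\smean{\b{\sigma}(\b{u_I})}$ piece, since $\b{\sigma}(\b{u_I})$ is elementwise constant and $\int_e\sjump{\b{\dot u_I}-\b{\dot u_h}}\,d\sigma=0$, and then integrate by parts in time in the remaining $\smean{\b{\sigma}(\b{u}-\b{u_I})}$ term.

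The hard part---and the reason the nonconforming CR choice pays off---is exactly this consistency term $A_3$. A naive trace bound yields only a half power $h_e^{1/2}\|\sjump{\b{u_I}-\b{u_h}}\|_{\b{L^2}(e)}$, which is insufficient for optimal order. The gain comes from the CR property that $\sjump{\b{u_I}-\b{u_h}}$ (and its time derivative) vanishes at each edge midpoint, so a Poincar\'e--Wirtinger inequality on $e$ followed by the discrete inverse/trace inequality of Lemma~\ref{inverse_inequality} upgrades this to a full $h_e\,|\b{u_I}-\b{u_h}|_{\b{H^1}(\cT_e)}$; Young's inequality then splits it into $\tfrac1\epsilon h^2\|\b{u}\|^2_{\b{H^2}}$-type terms and an absorbable $\epsilon\norm{\b{\eta}}^2$. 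Collecting $A_1,A_2,A_3$ in \eqref{paper5_5}, choosing $\epsilon$ small to absorb the $\norm{\b{\eta}}^2$ contributions on the left, and applying Gronwall's inequality over $[0,t]$ gives $\norm{\b{\eta}(t)}^2_{\b{L^\infty}(\mathcal{I};\b{V_h})}\lesssim h^2$. Combined with the bound on $\b{\theta}$, the triangle inequality delivers the claimed $\mathcal{O}(h)$ rate.
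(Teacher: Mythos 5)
Your proposal is correct and follows essentially the same route as the paper: the same $\b{\theta}+\b{\eta}$ splitting, the energy argument testing \eqref{paper5_1.99} with $\b{\dot{u}_I}(t)$ (justified by the commutation of $\mathbf{I}^{\b{CR}}$ with $\partial_t$), elementwise integration by parts leading to the decomposition $A_1+A_2+A_3$, and the same key devices for each term (time integration by parts for $A_1$, the Tresca identity $\b{\sigma}_\tau(\b{u})\cdot(\b{\dot{u}})_\tau=-g_a|(\b{\dot{u}})_\tau|$ for $A_2$, and the midpoint-vanishing jump plus Poincar\'e--Wirtinger upgrade for the consistency term $A_3$). The only cosmetic difference is that you invoke Gronwall at the end, where the paper simply absorbs the $\epsilon\norm{\b{u_I}-\b{u_h}}^2_{\b{L^\infty}(\mathcal{I};\b{V_h})}$ contributions by choosing $\epsilon$ small; both close the argument.
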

	\section{Fully-Discrete Approximation}\label{ch5sec5}
	In addition to divide the spatial domain into the regular triangulation, we also partition the time interval $[0, T]$ into subintervals $[t_{\n-1}, t_\n]$ for $\n = 1, 2, \ldots, N$, where $0 = t_0 < t_1 < t_2 < t_3 \ldots < t_N = T$. We represent the length of  the subinterval $[t_{\n-1}, t_\n]$ as $k_\n$, with $k$ denoting the maximum value among all $k_\n$. At each time step $t_\n$, the fully-discrete solution is identified by $\b{u}^{\n}$. 
	For the linear functionals $\b{f} $ and $\b{g} $ we use the notation $\b{f}^{\n} = \b{f}(t_\n)$ and $~\b{g}^{\n} = \b{g}(t_\n)$ to describe their value at time level $t_\n$. It is important to note that the above notations are well-defined due to Sobolev imbedding theorem, which gaurantees that $\b{W^{1,\infty}}(\mathcal{I}; \b{X}) \hookrightarrow \b{C}(\mathcal{I}; \b{X})$ for any Banach space $\b{X}.$ We have employed backward divided difference scheme to approximate the derivatives $i.e.~\delta \b{u}^{\n}= \frac{{u}^{\n}-{u}^{\n-1}}{{k_\n}}.$
	\par 
	Using the backward Euler scheme, we obtain the following fully-discrete 
	approximation of variational inequality $\eqref{paper5_1.9}.$
	
	\textbf{Problem 2}~(\textit{Fully-Discrete Variational Inequality})
	\textit{For each} $\n=1,2,\cdot \cdot N$, \textit{find} $\big\{\b{u_h}^\n \big\}_{\n=0}^{N}\in \b{\mathcal{K}^h_{CR}}$ \textit{such that} 
	\begin{align}
		a_h(\bm{u_h}^{\n}, \b{v_h}- \delta\b{u_h}^\n) +  j(\b{v_h}) - j(\delta\b{u_h}^\n) &\geq (\bm{l^{\n}}, \b{v_h}- \delta\b{u_h}^\n) \quad \forall~\b{v_h} \in  \b{\mathcal{K}^h_{CR}}, \label{paper5_FDI} \\
		\bm{u_h}^0 &= \mathbf{I}^{\b{CR}}\bm{u_o}. \label{paper51_FDI_1}
	\end{align}
	The existence and uniqueness of the solution of Problem 2 is  guaranteed by Lions and Stampacchia Theorem \cite{ciarlet2002finite}. 
	\subsection{{Error Estimates for Fully-Discrete Approximation}}
	\par 
	\noindent
	\vspace{0.3 cm}
	\\ 
	In this subsection, we will derive \emph{a priori} error estimates for fully-discrete scheme. 
	\begin{theorem}
		Let $\b{u}$ be the solution of continuous problem \eqref{paper5_1.9}-\eqref{paper5_1.10} and $\big\{\b{u_h}^{\text{\upshape{\n}}} \big\}_{\text{\upshape{\n}}=0}^{N}$ be the solution of fully-discrete problem \eqref{paper5_FDI}-\eqref{paper51_FDI_1}. Assume $\b{u}, \b{\dot}{\b{u}}\in \b{L^{\infty}}[\mathcal{I};\b{H^2}(\Omega)]$ and $ \b{\ddot}{\b{u}}  \in \b{L^{1}}[\mathcal{I};\b{H^2}(\Omega)]$ for any $t \in \mathcal{I}$. Then, we have 
		\begin{align*}
			\underset{\text{\upshape{\n}}}{max}~\norm{\b{u}^\text{\upshape{\n}} -\b{u_h}^\text{\upshape{\n}}} \lesssim h +k.
		\end{align*}
	\end{theorem}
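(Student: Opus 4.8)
The plan is to mirror the semi-discrete proof, but now the time derivative is replaced by the backward divided difference $\delta\b{u_h}^\n$, so I expect an extra consistency error of order $k$ coming from the discretization of $\b{\dot u}$ by $\delta\b{u}^\n$. First I would split the error at each time level as $\b{u}^\n - \b{u_h}^\n = \b{\theta}^\n + \b{\eta}^\n$ with $\b{\theta}^\n := \b{u}^\n - \b{u_I}^\n$ and $\b{\eta}^\n := \b{u_I}^\n - \b{u_h}^\n$, where $\b{u_I}^\n := \mathbf{I}^{\b{CR}}\b{u}(t_\n)$. The term $\norm{\b{\theta}^\n}$ is controlled exactly as before by Lemma \ref{Interpolation1}, giving $\norm{\b{\theta}^\n} \lesssim h |\b{u}(t_\n)|_{\b{H^2}(\O)} \lesssim h$. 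The heart of the matter is bounding $\norm{\b{\eta}^\n}$.

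To estimate $\norm{\b{\eta}^\n}$, I would start from the identity
\begin{align*}
	a_h(\b{\eta}^\n, \delta\b{u_I}^\n - \delta\b{u_h}^\n) = \frac{1}{2}\Big( \norm{\b{\eta}^\n}^2 - \norm{\b{\eta}^{\n-1}}^2 \Big)\frac{1}{k_\n} + \text{(nonnegative remainder)},
\end{align*}
which is the discrete analogue of $\tfrac12\tfrac{\p}{\p t}\norm{\b{\eta}}^2 = a_h(\b{\eta}, \b{\dot\eta})$ and follows from the algebraic inequality $(a-b)\cdot a \geq \tfrac12(|a|^2 - |b|^2)$ applied to the coercive form. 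Then I would insert $\pm\b{u}^\n$ and use the fully-discrete inequality \eqref{paper5_FDI} tested with $\b{v_h} = \delta\b{u_I}^\n \in \b{\mathcal{K}^h_{CR}}$ to replace $-a_h(\b{u_h}^\n, \delta\b{u_I}^\n - \delta\b{u_h}^\n)$ by the friction and load terms, exactly paralleling \eqref{paper5_1} in the semi-discrete case. This produces, after integration by parts in space as in \eqref{paper5_3}, the same three groups of terms $A_1, A_2, A_3$ but now summed over time levels with weights $k_\n$ rather than integrated, together with the new consistency terms measuring $\b{\dot u}(t_\n) - \delta\b{u}^\n$.

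Summing the telescoping inequality from $\n = 1$ to the current level and using $\b{\eta}^0 = \b{0}$, I would arrive at a discrete Gronwall-type bound
\begin{align*}
	\norm{\b{\eta}^\n}^2 \lesssim \sum_{\mathrm{m}=1}^\n k_{\mathrm{m}} \big( \text{data terms} \big) + \epsilon \max_{\mathrm{m}} \norm{\b{\eta}^{\mathrm{m}}}^2.
\end{align*}
The spatial data terms contribute $O(h^2)$ by the same trace-inequality and Poincar\'e--Wirtinger arguments used for $A_1$, $A_2$, $A_3$ in the first theorem; the key point there is again that $\sjump{\delta\b{u_I}^\n - \delta\b{u_h}^\n}$ vanishes at edge midpoints, which kills the $\smean{\b{\sigma}(\b{u_I})}$ contribution and lets the Crouzeix--Raviart property carry the argument through.

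The main obstacle, and the genuinely new ingredient, is the temporal consistency error: I must bound quantities of the form $\b{\dot u}(t_\n) - \delta\b{u}^\n = \b{\dot u}(t_\n) - \frac{\b{u}(t_\n) - \b{u}(t_{\n-1})}{k_\n}$. Using Taylor's theorem with integral remainder, this difference equals $\frac{1}{k_\n}\int_{t_{\n-1}}^{t_\n} (s - t_{\n-1})\,\b{\ddot u}(s)\,ds$ in the appropriate norm, so it is controlled by $\int_{t_{\n-1}}^{t_\n}\|\b{\ddot u}(s)\|_{\b{H^2}(\O)}\,ds$; this is precisely where the hypothesis $\b{\ddot u}\in \b{L^1}[\mathcal{I};\b{H^2}(\O)]$ enters. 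Weighting by $k_\n$ and summing over $\n$ yields a total contribution of order $k \, \|\b{\ddot u}\|_{\b{L^1}(\mathcal{I};\b{H^2}(\O))}$, giving the $O(k)$ part of the estimate. Finally, choosing $\epsilon$ small to absorb the $\max_{\mathrm{m}}\norm{\b{\eta}^{\mathrm{m}}}^2$ term on the left and combining with the bound on $\norm{\b{\theta}^\n}$ via the triangle inequality delivers $\max_\n \norm{\b{u}^\n - \b{u_h}^\n} \lesssim h + k$.
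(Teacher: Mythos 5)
Your proposal follows essentially the same route as the paper: the same Crouzeix--Raviart splitting, the telescoping energy argument on $a_h(\cdot,\delta\cdot)$, testing the discrete inequality with $\delta\b{u_I}^{\text{\upshape{n}}}$, spatial integration by parts with the midpoint/Poincar\'e--Wirtinger treatment of the jump terms, the Taylor-remainder bound via $\b{\ddot{u}}\in\b{L^1}[\mathcal{I};\b{H^2}(\Omega)]$, and the final quadratic absorption. The only detail you gloss over is that the discrete analogue of the time integration by parts used for $A_1$ and $A_3$ is an Abel (summation-by-parts) rearrangement of the telescoping sum, which the paper carries out explicitly to avoid the $1/k_{\text{\upshape{n}}}$ factor in $\sjump{\delta\b{u}^{\text{\upshape{n}}}-\delta\b{u_h}^{\text{\upshape{n}}}}$; otherwise the argument is the same.
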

	\begin{proof}
		Let $\b{e}^\n = \b{u}^\n- \b{u_h}^\n$. Decompose error into $\b{e}^\n= \b{\theta}^\n +\b{\eta}^\n$ where $\b{\eta}^\n = \b{u}^\n- \b{u_I}^\n$ and $\b{\theta}^\n = \b{u_I}^\n- \b{u_h}^\n$. Taking into the account the continuity of bilinear form $a_h(\cdot, \cdot)$ with respect to the norm $\norm{\cdot}$, we first obtain the lower bound of $a_h(\b{e}^\n, \delta\b{e}^\n)$ as follows
		\begin{align*}
			a_h(\b{e}^\n, \delta\b{e}^\n) &= \dfrac{1}{k_n} \bigg(a_h(\b{e}^\n ,\b{e}^\n)- a_h(\b{e}^\n ,\b{e}^{\n-1}) \bigg)\\
			&\geq  \dfrac{1}{k_n} \bigg(\norm{\b{e}^\n}^2- \norm{\b{e}^\n}\norm{ \b{e}^{\n-1}} \bigg) \\
			&\geq  \dfrac{1}{k_n} \bigg(\norm{\b{e}^\n}^2- \norm{\b{e}^{\n-1}}^2 \bigg).
		\end{align*}
		Next, we determine the upper bound for $a_h(\b{e}^\n, \delta\b{e}^\n)$ in the following manner:
		\begin{align}\label{paper5_eq11}
			a_h(\b{e}^\n, \delta\b{e}^\n) &= a_h(\b{e}^\n, \delta\b{\theta}^\n) +a_h(\b{e}^\n, \delta\b{\eta}^\n) \nonumber \\
			&= a_h(\b{e}^\n, \delta\b{\eta}^\n) +a_h(\b{u}^\n, \delta\b{u_I}^\n- \delta\b{u_h}^\n)-a_h(\b{u_h}^\n, \delta\b{u_I}^\n- \delta\b{u_h}^\n).
		\end{align}
		Since $\delta\b{u_I}^\n\in \b{\mathcal{K}^h_{CR}}$, using equation \eqref{paper5_FDI} we find 
		\begin{align}\label{paper5_eq12}
			-a_h(\bm{u_h}^{\n}, \delta\b{u_I}^\n- \delta\b{u_h}^\n) &\leq j(\delta\b{u_I}^\n) - j(\delta\b{u_h}^\n) - (\bm{l^{\n}}, \delta\b{u_I}^\n- \delta\b{u_h}^\n).
		\end{align}
		Inserting equation $\eqref{paper5_eq12}$ in relation $\eqref{paper5_eq11}$, we get 
		\begin{align}\label{paper5_eq13}
			a_h(\b{e}^\n, \delta\b{e}^\n) &\leq a_h(\b{e}^\n, \delta\b{\eta}^\n) +a_h(\b{u}^\n, \delta\b{u_I}^\n- \delta\b{u_h}^\n) + j(\delta\b{u_I}^\n) - j(\delta\b{u_h}^\n) - (\bm{l^{\n}}, \delta\b{u_I}^\n- \delta\b{u_h}^\n).
		\end{align}
		Owing to intgeration by parts in the second term of above inequality, we get
		\begin{align}\label{paper5_eq144}
			a_h(\b{u}^\n, \delta\b{u_I}^\n- \delta\b{u_h}^\n) &= \sum _{T \in \mathcal{T}_h}\int\limits_T \b{\sigma}(\b{u}^\n): \b{\epsilon}(\delta\b{u_I}^\n- \delta\b{u_h}^\n)~dx \nonumber  \\
			&= \sum_{T\in \mathcal{T}_h} \int\limits_{T} \b{f}(t_\n)\cdot (\delta\b{u_I}^\n- \delta\b{u_h}^\n)~dx +\sum_{e\in \mathcal{E}^N_h} \int\limits_{e} \b{g}(t_\n)\cdot (\delta\b{u_I}^\n- \delta\b{u_h}^\n)~d{\sigma} \nonumber \\ &+  \sum_{e\in \mathcal{E}^C_h} \int\limits_{e} (\b{\sigma}(\b{u^\n})\b{n}) \cdot (\delta\b{u_I}^\n- \delta\b{u_h}^\n)~d{\sigma} + \sum_{e\in \mathcal{E}^0_h} \int\limits_{e} \smean{\b{\sigma}(\b{u}^\n)}: \sjump{\delta\b{u_I}^\n- \delta\b{u_h}^\n}~d{\sigma}.
		\end{align}
		Combining equations \eqref{paper5_eq13} and \eqref{paper5_eq144}, we obtain
		\begin{align}\label{paper5_eq133}
			a_h(\b{e}^\n, \delta\b{e}^\n) &\leq a_h(\b{e}^\n, \delta\b{\eta}^\n) + \sum_{e\in \mathcal{E}^0_h} \int\limits_{e} \smean{\b{\sigma}(\b{u}^\n)}: \sjump{\delta\b{u_I}^\n- \delta\b{u_h}^\n}~d{\sigma}+ j(\delta\b{u_I}^\n) - j(\delta\b{u_h}^\n) \nonumber\\
			&+  \sum_{e\in \mathcal{E}^C_h} \int\limits_{e} (\b{\sigma}(\b{u^\n})\b{n}) \cdot (\delta\b{u_I}^\n- \delta\b{u_h}^\n)~d{\sigma} \nonumber \\
			&= T_1+T_2+T_3, 
		\end{align}
		where 
		\begin{align*}
			T_1&:= \sum_{e\in \mathcal{E}^0_h} \int\limits_{e} \smean{\b{\sigma}(\b{u}^\n)}: \sjump{\delta\b{u_I}^\n- \delta\b{u_h}^\n}~d{\sigma}, \\
			T_2&:= j(\delta\b{u_I}^\n) - j(\delta\b{u_h}^\n) +  \sum_{e\in \mathcal{E}^C_h} \int\limits_{e} (\b{\sigma}(\b{u^\n})\b{n}) \cdot (\delta\b{u_I}^\n- \delta\b{u_h}^\n)~d{\sigma},  \\
			T_3&:= a_h(\b{e}^\n, \delta\b{\eta}^\n). 
		\end{align*}
		$\bullet$ \textit{Estimates for $T_1$}:
		\begin{align*}
			T_1&= \sum_{e\in \mathcal{E}^0_h} \int\limits_{e} \smean{\b{\sigma}(\b{u}^\n)}: \sjump{\delta\b{u_I}^\n- \delta\b{u_h}^\n}~d{\sigma}\\ &=  \sum_{e\in \mathcal{E}^0_h} \int\limits_{e} \smean{\b{\sigma}(\b{u}^\n -\b{u_I}^\n )}: \sjump{\delta\b{u_I}^\n- \delta\b{u_h}^\n}~d{\sigma} + \sum_{e\in \mathcal{E}^0_h} \int\limits_{e} \smean{\b{\sigma}(\b{u_I}^\n )}: \sjump{\delta\b{u_I}^\n- \delta\b{u_h}^\n}~d{\sigma}. 
		\end{align*}
		Observe that the second term in the last relation vanishes as $\smean{\b{\sigma}(\b{u_I}^\n )}$ is constant and $\sum_{e\in \mathcal{E}^0_h} \int\limits_{e}  \sjump{\delta\b{u_I}^\n- \delta\b{u_h}^\n}~d{\sigma} = 0$ on any edge $e \in \mathcal{E}^0_h.$ Thus $T_1$ reduces to
		\begin{align}\label{paper5_eq14}
			T_1&=  \sum_{e\in \mathcal{E}^0_h} \int\limits_{e} \smean{\b{\sigma}(\b{u}^\n -\b{u_I}^\n )}: \sjump{\delta\b{u_I}^\n- \delta\b{u_h}^\n}~d{\sigma} \nonumber \\&= \sum_{e\in \mathcal{E}^0_h} \int\limits_{e} \smean{\b{\sigma}(\b{u}^\n -\b{u_I}^\n )}: \sjump{\delta\b{u_I}^\n- \delta\b{u}^\n}~d{\sigma} + \sum_{e\in \mathcal{E}^0_h} \int\limits_{e} \smean{\b{\sigma}(\b{u}^\n -\b{u_I}^\n )}: \sjump{\delta\b{u}^\n- \delta\b{u_h}^\n}~d{\sigma} \nonumber \\
			&= \sum_{e\in \mathcal{E}^0_h} \int\limits_{e} \smean{\b{\sigma}(\b{u}^\n -\b{u_I}^\n )}: \sjump{\delta\b{u_I}^\n}~d{\sigma} + \sum_{e\in \mathcal{E}^0_h} \int\limits_{e} \smean{\b{\sigma}(\b{u}^\n -\b{u_I}^\n )}: \sjump{\delta\b{u}^\n- \delta\b{u_h}^\n}~d{\sigma} \nonumber \\ 
			&= \sum_{e\in \mathcal{E}^0_h} \int\limits_{e} \smean{\b{\sigma}(\b{u}^\n -\b{u_I}^\n )}: \sjump{\delta\b{u_I}^\n - \b{\dot}{\b{u}}^\n}~d{\sigma} + \sum_{e\in \mathcal{E}^0_h} \int\limits_{e} \smean{\b{\sigma}(\b{u}^\n -\b{u_I}^\n )}: \sjump{\delta\b{u}^\n- \delta\b{u_h}^\n}~d{\sigma}.
		\end{align}
		Observe that 
		\begin{align*}
			\delta\b{u_I}^\n = \dfrac{\b{u_I}^\n - \b{u_I}^{\n-1}}{k_\n}=  \dfrac{\b{u_I}^\n -( \b{u_I}^{\n}-k_\n \b{\dot{u_I}}^\n)}{k_\n} =  \b{\dot{u_I}}^\n.
		\end{align*}
		A use of trace inequality and properties of interpolation map in equation \eqref{paper5_eq14} yields
		\begin{align}\label{paper5_eq141}
			T_1
			& \leq \sum_{e\in \mathcal{E}^0_h} \int\limits_{e} \smean{\b{\sigma}(\b{u}^\n -\b{u_I}^\n )}: \sjump{\delta\b{u}^\n- \delta\b{u_h}^\n}~d{\sigma} + \sum_{e\in \mathcal{E}^0_h}|\b{u}^\n -\b{u_I}^\n|_{\b{H^1}(e)}\|\b{\dot{u_I}}^\n - \b{\dot}{\b{u}}^\n\|_{\b{L^2}(e)} \\
			& \leq \sum_{e\in \mathcal{E}^0_h} \int\limits_{e} \smean{\b{\sigma}(\b{u}^\n -\b{u_I}^\n )}: \sjump{\delta\b{u}^\n- \delta\b{u_h}^\n}~d{\sigma} + h^2\|\b{u}^\n\|_{\b{H^2}(\Omega)}\|\b{\dot}{\b{u}}^\n\|_{\b{H^2}(\Omega)} \nonumber \\
			& \leq \sum_{e\in \mathcal{E}^0_h} \int\limits_{e} \smean{\b{\sigma}(\b{u}^\n -\b{u_I}^\n )}: \sjump{\delta\b{u}^\n- \delta\b{u_h}^\n}~d{\sigma} + 2h^2\|\b{u}^\n\|^2_{\b{H^2}(\Omega)}+2h^2\|\b{\dot}{\b{u}}^\n\|^2_{\b{H^2}(\Omega)}.
		\end{align}
		Using Pioncar$\acute{e}$-Witinger inequality followed by inverse trace inequality, we find $$\|\sjump{\delta\b{u_I}^\n- \delta\b{u_h}^\n}\|_{\b{L^2(e)}} \leq h_e^{\frac{1}{2}}|\delta\b{u_I}^\n- \delta\b{u_h}^\n|_{\b{H^1}(\mathcal{T}_e)}~\forall~e\in \mathcal{E}^0_h.$$Thus, we have 
		\begin{align}\label{paper5_eq1411}
			T_1
			& \leq \sum_{e\in \mathcal{E}^0_h} \int\limits_{e} \smean{\b{\sigma}(\b{u}^\n -\b{u_I}^\n )}: \sjump{\delta\b{u}^\n- \delta\b{u_h}^\n}~d{\sigma} + \sum_{e\in \mathcal{E}^0_h}h_e|\b{u}^\n |_{\b{H^2}(\cT_e)}|\delta\b{u_I}^\n|_{\b{H^1}(\mathcal{T}_e)}.
		\end{align}
		Also, since , we obtain 
		\begin{align}\label{paper5_eq141115}
			T_1
			& \leq \sum_{e\in \mathcal{E}^0_h} \int\limits_{e} \smean{\b{\sigma}(\b{u}^\n -\b{u_I}^\n )}: \sjump{\delta\b{u}^\n- \delta\b{u_h}^\n}~d{\sigma} + h|\b{u}^\n |_{\b{H^2}(\Omega)}|\b{\dot}{\b{u}}^\n|_{\b{H^1}(\Omega)}.
		\end{align}
		$\bullet$ \textit{Estimates for $T_2$}:
		\begin{align}
			T_2&= j(\delta\b{u_I}^\n) - j(\delta\b{u_h}^\n) +  \sum_{e\in \mathcal{E}^C_h} \int\limits_{e} (\b{\sigma}(\b{u^\n})\b{n}) \cdot (\delta\b{u_I}^\n- \delta\b{u_h}^\n)~d{\sigma} \nonumber \\
			&=  \sum_{e\in \mathcal{E}^C_h} \int\limits_{e} (\b{\sigma}_{\tau}(\b{u^\n})) \cdot (\delta\b{u_I}^\n- \delta\b{u_h}^\n)_\tau~d{\sigma} +  j(\delta\b{u_I}^\n) - j(\delta\b{u_h}^\n)  \nonumber \\ 
			&=  \sum_{e\in \mathcal{E}^C_h} \int\limits_{e} (\b{\sigma}_{\tau}(\b{u^\n})) \cdot (\delta\b{u_I}^\n)_\tau~d{\sigma} +  \sum_{e\in \mathcal{E}^C_h} \int\limits_{e} g_a|(\delta\b{u_h}^\n)_\tau |~d{\sigma} \nonumber \\
			&+ \sum_{e\in \mathcal{E}^C_h} \int\limits_{e} g_a|(\delta\b{u_I}^\n)_\tau |~d{\sigma}- \sum_{e\in \mathcal{E}^C_h} \int\limits_{e} g_a|(\delta\b{u_h}^\n)_\tau |~d{\sigma} \nonumber \\
			&=  \sum_{e\in \mathcal{E}^C_h} \int\limits_{e} (\b{\sigma}_{\tau}(\b{u^\n})) \cdot (\delta\b{u_I}^\n)_\tau~d{\sigma} +  \sum_{e\in \mathcal{E}^C_h} \int\limits_{e} g_a|(\delta\b{u_I}^\n)_\tau |~d{\sigma}.
		\end{align}
		Since $\delta\b{u_I}^\n =\b{\dot{u_I}}^\n$, we have 
		\begin{align*}
			T_2&= \sum_{e\in \mathcal{E}^C_h} \int\limits_{e} (\b{\sigma}_{\tau}(\b{u^\n})) \cdot (\b{\dot{u_I}}^\n - \b{\dot{u}}^\n)_\tau~d{\sigma} + \sum_{e\in \mathcal{E}^C_h} \int\limits_{e} (\b{\sigma}_{\tau}(\b{u^\n})) \cdot ( \b{\dot{u}}^\n)_\tau~d{\sigma} + \sum_{e\in \mathcal{E}^C_h} \int\limits_{e} g_a|(\delta\b{u_I}^\n)_\tau |~d{\sigma}
		\end{align*}
		Exploiting the fact that $\b{\sigma}_{\tau}(\b{u^\n}) \cdot  \b{\dot{u}}^\n_\tau = -g_a |\b{\dot{u}}^\n_\tau|$, the last relation reduces to 
		\begin{align*}
			T_2 &\leq  2\sum_{e\in \mathcal{E}^C_h}\int\limits_eg_a |\b{\dot{u_I}}^\n - \b{\dot{u}}^\n_\tau|~d{\sigma} \leq 2\|g_a\|_{L^{\infty}(\Gamma_C)}\sum_{e\in \mathcal{E}^C_h}\int\limits_e|\b{\dot{u_I}}^\n - \b{\dot{u}}^\n_\tau|~d{\sigma}
		\end{align*}
		Using Cauchy-Schwarz inequality followed by trace inequality and Lemma \ref{Interpolation1}, we arrive at 
		\begin{align*}
			T_2 & \leq 2\|g_a\|_{L^{\infty}(\Gamma_C)}\sum_{e\in \mathcal{E}^C_h}h_e^{\frac{1}{2}}\|\b{\dot{u_I}}^\n - \b{\dot{u}}^\n_\tau\|_{\b{L^2}(e)} \\
			& \leq 2\|g_a\|_{L^{\infty}(\Gamma_C)}\sum_{e\in \mathcal{E}^C_h} \sum_{T \in \mathcal{T}_e} \bigg( \|\b{\dot{u_I}}^\n - \b{\dot{u}}^\n_\tau\|_{\b{L^2}(T)} + h_e |\b{\dot{u_I}}^\n - \b{\dot{u}}^\n_\tau|_{\b{H^1}(T)} \bigg) \\
			&\leq  2\|g_a\|_{L^{\infty}(\Gamma_C)} h^2 |\b{\dot{u}}^\n|_{\b{H^2}(\Omega)}.
		\end{align*}
		$\bullet$ \textit{Estimates for $T_3$}:
		\begin{align*}
			T_3&= a_h(\b{e}^\n, \delta\b{\eta}^\n) = \dfrac{1}{k_n}a_h(\b{e}^\n, \b{\eta}^\n - \b{\eta}^{\n-1}) \leq \dfrac{1}{k_n} \norm{\b{e}^\n}\norm{\b{\eta}^\n - \b{\eta}^{\n-1}}.
		\end{align*}
		It suffices to bound $\norm{\b{\eta}^\n - \b{\eta}^{\n-1}}$ as follows
		\begin{align*}
			\norm{\b{\eta}^\n - \b{\eta}^{\n-1}} &= \norm{\b{u}^\n- \b{u_I}^\n- (\b{u}^{\n-1}- \b{u_I}^{\n-1})} \\
			&= \norm{\b{u}^\n- \b{u}^{\n-1}-( \b{u_I}^\n- \b{u_I}^{\n-1})}. 
		\end{align*}
		Using Taylor's theorem, we obtain
		\begin{align}\label{paper5_eqn20}
			\norm{\b{\eta}^\n - \b{\eta}^{\n-1}} &= \norm{k_n\b{\dot{u_I}}^{\n-1} - k_n\b{\dot{u}}^{\n-1} - \int\limits_{t_{\n-1}}^{t_\n}(t_{\n}-s)\b{\ddot}{\b{u}}(s)~ds}
			\nonumber \\
			&\leq \norm{k_\n\b{\dot{u_I}}^{\n-1} - k_n\b{\dot{u}}^{\n-1}}+\norm{~\int\limits_{t_{\n-1}}^{t_\n}(t_{\n}-s)\b{\ddot}{\b{u}}(s)~ds} \nonumber \\
			& \leq hk_\n |\b{\dot{u}}^{\n-1}|_{\b{H^2}(\Omega)} +k_\n \norm{~\int\limits_{t_{\n-1}}^{t_\n}\b{\ddot}{\b{u}}(s)~ds}.
		\end{align}
		Combining the estimates for $T_1$, $T_2$ and $T_3$, we obtain
		\begin{align*}
			a_h(\b{e}^\n, \delta\b{e}^\n) &\leq  \sum_{e\in \mathcal{E}^0_h} \int\limits_{e} \smean{\b{\sigma}(\b{u}^\n -\b{u_I}^\n )}: \sjump{\delta\b{u}^\n- \delta\b{u_h}^\n}~d{\sigma} + 2\|g_a\|_{L^{\infty}(\Gamma_C)} h^2 |\b{\dot{u}}^\n|_{\b{H^2}(\Omega)} \\
			&+\frac{1}{k_n}\bigg(\norm{ \b{e}^\n} \big( hk_\n |\b{\dot{u}}^{\n-1}|_{\b{H^2}(\Omega)} +k_\n ~\int\limits_{t_{\n-1}}^{t_\n}\norm{\b{\ddot}{\b{u}}(t)}~dt\big) \bigg).
		\end{align*}
		Exploiting the lower bound of $a_h(\b{e}^\n, \delta\b{e}^\n)$, we deduce
		\begin{align*}
			\dfrac{1}{k_n} \bigg(\norm{\b{e}^\n}^2- \norm{\b{e}^{\n-1}}^2 \bigg) &\leq \sum_{e\in \mathcal{E}^0_h} \int\limits_{e} \smean{\b{\sigma}(\b{u}^\n -\b{u_I}^\n )}: \sjump{\delta\b{u}^\n- \delta\b{u_h}^\n}~d{\sigma} + 2\|g_a\|_{L^{\infty}(\Gamma_C)} h^2 |\b{\dot{u}}^\n|_{\b{H^2}(\Omega)} \\&
			+\frac{1}{k_n}\bigg(\norm{ \b{e}^\n} \big( hk_\n |\b{\dot{u}}^{\n-1}|_{\b{H^2}(\Omega)} +k_\n ~\int\limits_{t_{\n-1}}^{t_\n}\norm{\b{\ddot}{\b{u}}(t)}~dt\big) \bigg).
		\end{align*}
		This implies
		\begin{align*}
			\norm{\b{e}^\n}^2- \norm{\b{e}^{\n-1}}^2&\leq \sum_{e\in \mathcal{E}^0_h} \int\limits_{e} k_n\smean{\b{\sigma}(\b{u}^\n -\b{u_I}^\n )}: \sjump{\delta\b{u}^\n- \delta\b{u_h}^\n}~d{\sigma} + k_j (h^2\|\b{u}^\n\|^2_{\b{H^2}(\Omega)}+h^2\|\b{\dot}{\b{u}}^\n\|^2_{\b{H^2}(\Omega)})\\&+ 2\|g_a\|_{L^{\infty}(\Gamma_C)} h^2k_{\n} |\b{\dot{u}}^\n|_{\b{H^2}(\Omega)} 
			+ hk_\n \norm{ \b{e}^\n} |\b{\dot{u}}^{\n-1}|_{\b{H^2}(\Omega)} +k_\n \norm{ \b{e}^\n} ~\int\limits_{t_{\n-1}}^{t_\n}\norm{\b{\ddot}{\b{u}}(t)}~dt.
		\end{align*}
		A simple induction from $\n=1,2,.....,N$ yields
		\begin{align}\label{ch5_equation18}
			\norm{\b{e}^N}^2&\leq \sum_{j=1}^N \sum_{e\in \mathcal{E}^0_h} \int\limits_{e} k_j \smean{\b{\sigma}(\b{u}^j -\b{u_I}^j )}: \sjump{\delta\b{u}^j- \delta\b{u_h}^j}~d{\sigma}+  (h^2\underset{\n}{\max}\|\b{u}^\n\|^2_{\b{H^2}(\Omega)} \nonumber \\&+h^2\underset{\n}{\max}\|\b{\dot}{\b{u}}^\n\|^2_{\b{H^2}(\Omega)})\sum_{j=1}^N k_j+ 2\|g_a\|_{L^{\infty}(\Gamma_C)} h^2\sum_{j=1}^N k_j ~\underset{\n}{\max}~|\b{\dot{u}}^\n|_{\b{H^2}(\Omega)} 
			\nonumber \\&+ h ~ \underset{\n}{\max}~\norm{ \b{e}^\n}~\underset{\n}{\max}~ |\b{\dot{u}}^{\n-1}|_{\b{H^2}(\Omega)} \sum_{j=1}^N k_j
			+~\underset{\n}{\max}~\norm{ \b{e}^\n} \sum_{j=1}^N k_j  ~\int\limits_{t_{j-1}}^{t_j}\norm{\b{\ddot}{\b{u}}(t)}~dt.
		\end{align}
		Next, we will estimate the first term in the above relation as follows
		\begin{align*}
			\sum_{j=1}^N \sum_{e\in \mathcal{E}^0_h} \int\limits_{e} k_j \smean{\b{\sigma}(\b{u}^j -\b{u_I}^j )}: \sjump{\delta\b{u}^j&- \delta\b{u_h}^j}~d{\sigma}\\&=	\sum_{j=1}^N \sum_{e\in \mathcal{E}^0_h} \int\limits_{e}  \smean{\b{\sigma}(\b{u}^j -\b{u_I}^j )}: \sjump{(\b{u}^j - \b{u}^{j-1}) - (\b{u_h}^j-\b{u_h}^{j-1})}~d{\sigma} \\
			&=\sum_{j=1}^N \sum_{e\in \mathcal{E}^0_h} \int\limits_{e} \sjump{(\b{u}^j - \b{u_h}^j ) - (\b{u}^{j-1}-\b{u_h}^{j-1})} : \smean{\b{\sigma}(\b{u}^j -\b{u_I}^j )} ~d{\sigma}.
		\end{align*}	
		Upon re-arranging these terms we obtain
		\begin{align*}
			\sum_{j=1}^N \sum_{e\in \mathcal{E}^0_h} \int\limits_{e} \sjump{(\b{u}^j &- \b{u_h}^j ) - (\b{u}^{j-1}-\b{u_h}^{j-1})} : \smean{\b{\sigma}(\b{u}^j -\b{u_I}^j )} ~d{\sigma} \notag\\&= \sum_{e\in \mathcal{E}^0_h} \int\limits_{e}\sjump{\b{u}^N - \b{u_h}^N} : \smean{\b{\sigma}(\b{u}^N -\b{u_I}^N )} ~d{\sigma} -  \sum_{e\in \mathcal{E}^0_h} \int\limits_{e}\sjump{\b{u}^0 - \b{u_h}^0} : \smean{\b{\sigma}(\b{u}^1 -\b{u_I}^1 )} ~d{\sigma} \notag\\&+ 	\sum_{j=1}^{N-1} \sum_{e\in \mathcal{E}^0_h} \int\limits_{e} \sjump{\b{u}^j - \b{u_h}^j } : \smean{\b{\sigma}(\b{u}^j -\b{u_I}^j )-\b{\sigma}(\b{u}^{j+1} -\b{u_I}^{j+1} )} ~d{\sigma}. 
		\end{align*}	
		Using Cauchy-Schwarz inequality, we find 
		\begin{align*}
			\sum_{j=1}^N \sum_{e\in \mathcal{E}^0_h} \int\limits_{e} \sjump{(\b{u}^j &- \b{u_h}^j ) - (\b{u}^{j-1}-\b{u_h}^{j-1})} : \smean{\b{\sigma}(\b{u}^j -\b{u_I}^j )} ~d{\sigma} \notag \\ \nonumber&= \sum_{e\in \mathcal{E}^0_h} \|\sjump{\b{u}^N - \b{u_h}^N}\|_{\b{L^2}(e)}\|\smean{\b{\sigma}(\b{u}^N -\b{u_I}^N )}\|_{\b{L^2}(e)} \\&+ \sum_{e\in \mathcal{E}^0_h} \|\sjump{\b{u}^0 - \b{u_h}^0}\|_{\b{L^2}(e)}\|\smean{\b{\sigma}(\b{u}^1 -\b{u_I}^1 )}\|_{\b{L^2}(e)}\\  &+ 	\sum_{j=1}^{N-1} \sum_{e\in \mathcal{E}^0_h} \|\sjump{\b{u}^j - \b{u_h}^j }\|_{\b{L^2}(e)} \|\smean{\b{\sigma}(\b{u}^j -\b{u_I}^j )-\b{\sigma}(\b{u}^{j+1} -\b{u_I}^{j+1} )}\|_{\b{L^2}(e)} \\
			&\leq \norm{\b{e}^N}h_e|\b{u}^N|_{\b{H^2}(\Omega)} + \norm{\b{e}^0}h_e|\b{u}^1|_{\b{H^2}(\Omega)}\\  &+ 	\sum_{j=1}^{N-1} \sum_{e\in \mathcal{E}^0_h} \|\sjump{\b{u}^j - \b{u_h}^j }\|_{\b{L^2}(e)} \|\smean{\b{\sigma}(\b{u}^j -\b{u_I}^j )-\b{\sigma}(\b{u}^{j+1} -\b{u_I}^{j+1} )}\|_{\b{L^2}(e)}
		\end{align*}
		as a result of trace inequality and properties of interpolation map. Now, consider
		\begin{align}\label{ch5_eq25}
			\sum_{j=1}^{N-1} \sum_{e\in \mathcal{E}^0_h} &\|\sjump{\b{u}^j - \b{u_h}^j }\|_{\b{L^2}(e)} \|\smean{\b{\sigma}(\b{u}^j -\b{u_I}^j )-\b{\sigma}(\b{u}^{j+1} -\b{u_I}^{j+1} )}\|_{\b{L^2}(e)} \nonumber\\&\leq \sum_{j=1}^{N-1} \bigg( \sum_{e\in \mathcal{E}^0_h} h_e^{-1}\|\sjump{\b{u}^j - \b{u_h}^j }\|^2_{\b{L^2}(e)} \bigg)^{\frac{1}{2}} \bigg( \sum_{e\in \mathcal{E}^0_h} h_e\|\smean{\b{\sigma}(\b{u}^j -\b{u_I}^j )-\b{\sigma}(\b{u}^{j+1} -\b{u_I}^{j+1} )}\|^2_{\b{L^2}(e)}\bigg)^{\frac{1}{2}}
			\nonumber	\\&\leq \sum_{j=1}^{N-1} \norm{\b{e}^j}\bigg( \sum_{e\in \mathcal{E}^0_h} h_e\|\smean{\b{\sigma}(\b{u}^j -\b{u_I}^j )-\b{\sigma}(\b{u}^{j+1} -\b{u_I}^{j+1} )}\|^2_{\b{L^2}(e)}\bigg)^{\frac{1}{2}}
			\nonumber	\\&= \sum_{j=1}^{N-1} \norm{\b{e}^j}\bigg( \sum_{e\in \mathcal{E}^0_h} h_e|(\b{u}^j -\b{u_I}^j )-(\b{u}^{j+1} -\b{u_I}^{j+1} )|^2_{\b{H^1}(e)}\bigg)^{\frac{1}{2}} \nonumber\\
			&= \sum_{j=1}^{N-1} \norm{\b{e}^j}\bigg( \sum_{e\in \mathcal{E}^0_h} h_e k_{j+1}^2|\delta\b{\eta}^{j+1}|^2_{\b{H^1}(e)}\bigg)^{\frac{1}{2}}.
		\end{align}
		For any $j=1,2,\hdots, N-1 $,  using trace inequality we find
		\begin{align}\label{ch5_sign23}
			|\delta\b{\eta}^{j+1}|_{\b{H^1}(e)} &\leq h_e^{-\frac{1}{2}}|\delta\b{\eta}^{j+1}|_{\b{H^1}(\cT_e)} +h_e^{\frac{1}{2}} |\delta\b{\eta}^{j+1}|_{\b{H^2}(\cT_e)} \nonumber \\ 
			&= h_e^{-\frac{1}{2}}|\delta\b{\eta}^{j+1}|_{\b{H^1}(\cT_e)} +h_e^{\frac{1}{2}} |\delta\b{u}^{j+1}|_{\b{H^2}(\cT_e)} 
		\end{align}
		since $\b{u_I}^j$ is a linear polynomial. An application of Taylor's theorem yields
		\begin{align*}
			\delta\b{u}^{j+1} = \frac{1}{k_{j+1}}\bigg( k_{j+1} \b{\dot}{\b{u}}^j + \int_{t_j}^{t_{j+1}} (t_{j+1}-t)\b{\ddot}{\b{u}}(t)~dt \bigg).
		\end{align*}
		Thus, 
		\begin{align}\label{ch5_sign20}
			|\delta\b{u}^{j+1}|_{\b{H^2}(T)}\leq | \b{\dot}{\b{u}}^j |_{\b{H^2}(T)} + \int_{t_j}^{t_{j+1}}| \b{\ddot}{\b{u}}(t)|_{\b{H^2}(T)}~ dt.
		\end{align}
		Also, using relation derived in equation \eqref{paper5_eqn20} we find
		\begin{align}\label{ch5_sign21}
			|\delta\b{\eta}^{j+1}|_{\b{H^1}(T)} &= \frac{1}{k_{j+1}}|\b{\eta}^{j+1}-\b{\eta}^{j}|_{\b{H^1}(T)} \leq h_e| \b{\dot}{\b{u}}^j |_{\b{H^2}(T)} + \int_{t_j}^{t_{j+1}}| \b{\ddot}{\b{u}}(t)|_{\b{H^1}(T)}~ dt.
		\end{align}
		Combining the relations obtained in equations \eqref{ch5_sign23}, \eqref{ch5_sign20} and \eqref{ch5_sign21}, we obtain
		\begin{align}
			|\delta\b{\eta}^{j+1}|_{\b{H^1}(e)} &\leq h_e^{\frac{1}{2}}\| \b{\dot}{\b{u}}^j \|_{\b{H^2}(T)} + h_e^{-\frac{1}{2}} \int_{t_j}^{t_{j+1}}| \b{\ddot}{\b{u}}(t)|_{\b{H^1}(T)}~ dt + h_e^{\frac{1}{2}} \int_{t_j}^{t_{j+1}}| \b{\ddot}{\b{u}}(t)|_{\b{H^2}(T)}~ dt.
		\end{align}
		Thus, the equation \eqref{ch5_eq25} stems down to 
		\begin{align}\label{ch5_eq251}
			\sum_{j=1}^{N-1} &\sum_{e\in \mathcal{E}^0_h} \|\sjump{\b{u}^j - \b{u_h}^j }\|_{\b{L^2}(e)} \|\smean{\b{\sigma}(\b{u}^j -\b{u_I}^j )-\b{\sigma}(\b{u}^{j+1} -\b{u_I}^{j+1} )}\|_{\b{L^2}(e)} \nonumber\\& \leq \sum_{j=1}^{N-1} \norm{\b{e}^j}\bigg( h_e^2k_{j+1}^2\| \b{\dot}{\b{u}}^j \|^2_{\b{H^2}(T)} + k_{j+1}^2 \big(\int_{t_j}^{t_{j+1}}| \b{\ddot}{\b{u}}(t)|_{\b{H^1}(T)}~ dt \big)^2+ h_e^{2} k_{j+1}^2 \big( \int_{t_j}^{t_{j+1}}| \b{\ddot}{\b{u}}(t)|_{\b{H^2}(T)}~ dt\big)^2 \bigg)^{\frac{1}{2}} \nonumber \\
			& \leq \sum_{j=1}^{N-1} \norm{\b{e}^j}\Bigg( h_ek_{j+1}\| \b{\dot}{\b{u}}^j \|_{\b{H^2}(T)} + k_{j+1} \int_{t_j}^{t_{j+1}}| \b{\ddot}{\b{u}}(t)|_{\b{H^1}(T)}~ dt + h_e k_{j+1} \int_{t_j}^{t_{j+1}}| \b{\ddot}{\b{u}}(t)|_{\b{H^2}(T)}~ dt\Bigg). \nonumber
		\end{align}
		Thus, the relation \eqref{ch5_equation18} reduces to
		\begin{align}
			\norm{\b{e}^N}^2&\leq h_e~\underset{\n}{\max}~|\b{\dot{u}}^\n|_{\b{H^2}(\Omega)} \underset{\n}{\max}~\norm{ \b{e}^\n} 	\sum_{j=1}^N k_{j} +  h_ek~\underset{\n}{\max}~\norm{ \b{e}^\n}  \bigg(\int_{\mathcal{I}}| \b{\ddot}{\b{u}}(t)|_{\b{H^1}(\Omega)}~ dt \bigg)\nonumber \\ &+ k~ \underset{\n}{\max}~\norm{ \b{e}^\n} \int_{\mathcal{I}}| \b{\ddot}{\b{u}}(t)|_{\b{H^2}(\Omega)}~ dt 
			+ 2\|g_a\|_{L^{\infty}(\Gamma_C)} h^2\sum_{j=1}^N k_j ~\underset{\n}{\max}~|\b{\dot{u}}^\n|_{\b{H^2}(\Omega)} 
			\\ \nonumber &+ h ~\underset{\n}{\max}~\norm{ \b{e}^\n}~\underset{\n}{\max}~ |\b{\dot{u}}^{\n-1}|_{\b{H^2}(\Omega)} 
			+~\underset{\n}{\max}~\norm{ \b{e}^\n} \sum_{j=1}^N k_j  ~\int\limits_{t_{j-1}}^{t_j}\norm{\b{\ddot}{\b{u}}(t)}~dt \\
			&+ h ~\underset{\n}{\max}~\norm{ \b{e}^\n}~\underset{\n}{\max}~ |\b{{u}}^{\n}|_{\b{H^2}(\Omega)} + b(h^2\underset{\n}{\max}\|\b{u}^\n\|^2_{\b{H^2}(\Omega)}+h^2\underset{\n}{\max}\|\b{\dot}{\b{u}}^\n\|^2_{\b{H^2}(\Omega)}).
		\end{align}	
		Let $M= \underset{\n}{max}~\norm{\b{e}^\n}$. Then we have
		\begin{align*}
			M^2&\leq M \big(h~|\b{u}^\n|_{\b{L^\infty}({\mathcal{I};\b{H^2}(\Omega))}}+h~|\b{\dot}{\b{u}}^\n|_{\b{L^\infty}({\mathcal{I};\b{H^2}(\Omega))}}+h^2\norm{\b{\ddot{u}}}_{\b{L^1}(\mathcal{I};\b{V})} + k^2\norm{\b{\ddot{u}}}_{\b{L^1}(\mathcal{I};\b{V})}  \\&+ k\norm{\b{\ddot{u}}}_{\b{L^1}(\mathcal{I};\b{H^2}(\Omega))}\big) + h^2|\b{u}|^2_{\b{L^\infty}({\mathcal{I};\b{H^2}(\Omega))}} + k^2\norm{\b{\ddot{u}}}^2_{\b{L^1}(\mathcal{I};\b{V})}+ h^2|\b{\dot}{\b{u}}|^2_{\b{L^\infty}({\mathcal{I};\b{H^2}(\Omega))}}.
		\end{align*}
		Using the identity if $x^2 \leq ax +b$ where $x,a,b>0 $ then $x \leq a+b^{\frac{1}{2}}$, we obtain
		\begin{align*}
			\underset{\n}{max}~\norm{\b{e}^\n} \lesssim h+k.
		\end{align*}
		This completes the proof.
	\end{proof}
	\section{Numerical Experiments}\label{ch5sec6}
	In this section, we present the numerical experiment to examine the convergence characteristics of the fully discrete scheme on uniform triangular grids. The experiment has been carried out in MATLAB$\_$R2020B. In order to transform the variational inequality \eqref{paper5_FDI} into an equality, we introduce Lagrange multiplier $\b{\lambda_h} \in \b{L^\infty}(\Gamma_C)$ using the Uzawa algorithm \cite{glowinski2008lectures}. With this the variational inequality \eqref{paper5_FDI} is equivalent to 
	\begin{align*}
		a_h(\bm{u_h}^{\n}, \b{v_h}) +  \int\limits_{\Gamma_C} g_a \b{\lambda_{h\tau}}\cdot \b{v_{h\tau}} = (\bm{l^{\n}}, \b{v_h}) \quad \forall~\b{v_h} \in  \b{\mathcal{K}^h_{CR}}.
	\end{align*}
	The following is the concise overview of the Uzawa algorithm implemented at each time step $t_\n$.
	\par
	\noindent
	\vspace{0.5 cm}
	\textbf{Step 1~:} Initialise $\b{\lambda^0_h}=\b{0}$ and compute the solution $[\b{u^\n_h}]^0 \in \b{\mathcal{K}^h_{CR}}$ of the discrete variational equality.
	\begin{align*}
		a_h([\bm{u_h}^{\n}]^0, \b{v_h})  = (\bm{l^{\n}}, \b{v_h}) \quad \forall~\b{v_h} \in  \b{\mathcal{K}^h_{CR}}.
	\end{align*}
	\vspace{0.3 cm}
	\textbf{Step 2~:} Define a projection map $P$ as
	\begin{align*}
		P(\chi) := \text{sup}(-1,  \text{inf}
		(1,\chi))~~~\forall~\chi\in\b{L^{\infty}}(\Gamma_C). 
	\end{align*} 
	With the help of this projection operator, update the Lagrange multiplier $\b{\lambda}^k_{\b{h}}$,~$k=1,2,...$ as 
	\begin{align*}
		\b{\lambda}^k_{\b{h}} = P(\b{\lambda}^{k-1}_{\b{h}}+\tilde{\rho} c_{\tau}{{[\b{u}^{\n}_{\b{h}}]^{k-1}}})
	\end{align*}
	and solve for $[\b{u}^{\n}_{\b{h}}]^k$ from the following equation
	\begin{align*}
		a_h([\b{u}^{\n}_{\b{h}}]^k, \b{v_h})=&(\bm{l^{\n}}, \b{v_h})+\int\limits_{\Gamma_C}c_\tau \b{\lambda}^k_{\b{h\tau}}\cdot \b{v_{h\tau}}~ds \quad \forall~\b{v_h} \in  \b{\mathcal{K}^h_{CR}}
	\end{align*}
	where, $\tilde{\rho}$ is a positive constant and is chosen to be 1 in the next example.
	\par
	\vspace{0.3 cm}
	\noindent
	\textbf{Step 3 :} The iteration process is terminated when $|[\b{u}^{\n}_{\b{h}}]^k -[\b{u}^{\n}_{\b{h}}]^{k-1}| < \epsilon$ where $\epsilon$ denotes the error tolerance.
	\par
	Now we present the numerical outcome of a two-dimensional example. Therein, we employ fully discrete scheme and approximate the derivatives by backward Euler difference. The domain under consideration is a square domain and the traingles are refined uniformly at each iteration. We set the error tolerance $\epsilon$ as $10^{-8}$ and the penalty parameter $\rho$ is chosen as 10.  
	\begin{figure*}[!ht]			
		\centering
		\includegraphics[width=0.55\textwidth]{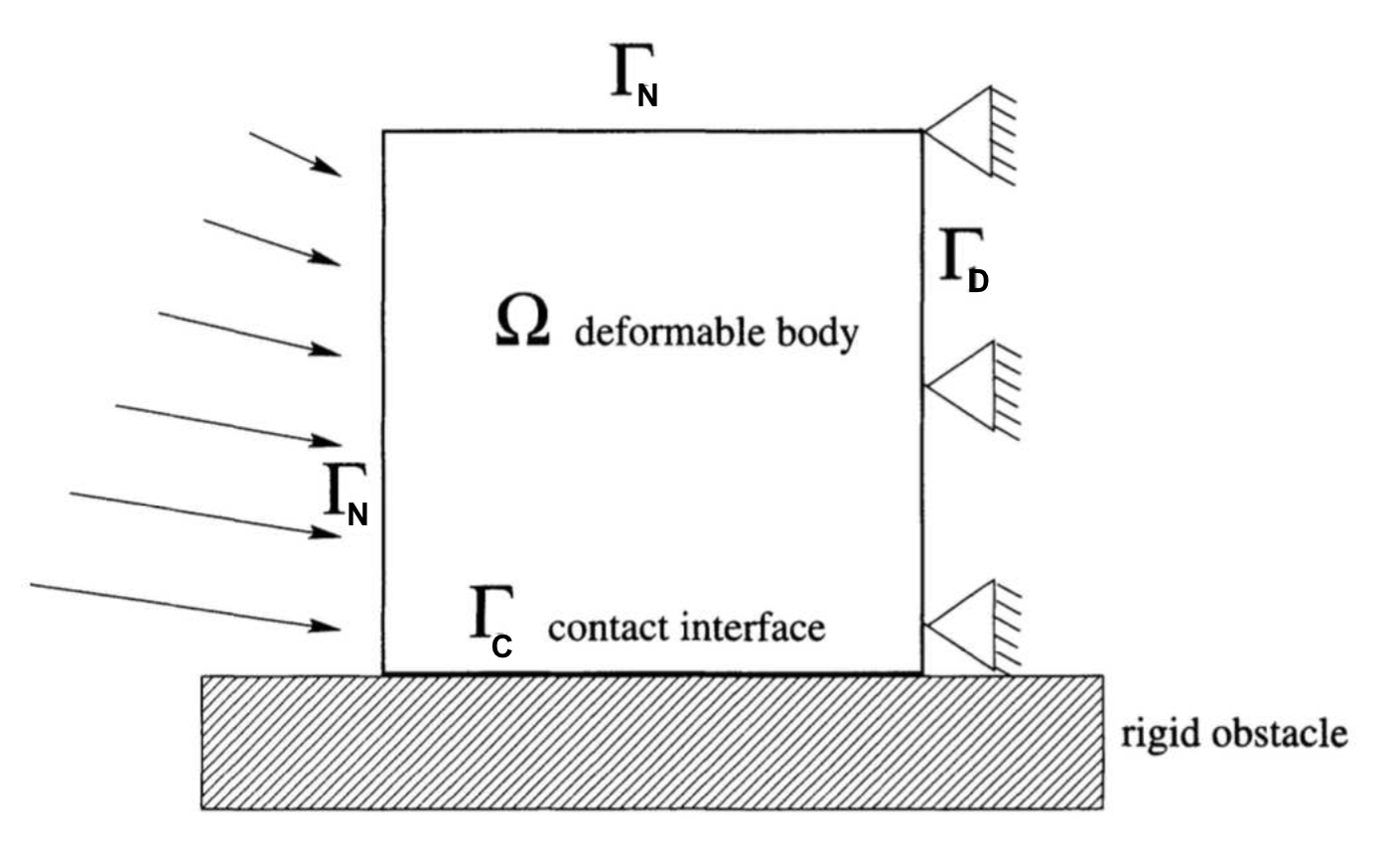}
		\caption{Physical Setting for Example 5.1.}
		\label{ch5_0}
	\end{figure*}
	\begin{example}\label{exp1}
		In this model problem, we consider deformation of the domain $\O:= (0,4)\times(0,4)$ which represents the cross-section of a three dimensional linearly elastic body subjected to certain traction forces. The body is clamped on $\Gamma_D:= \{4\} \times (0,4)$ and therefore the displacement field vanishes there. Surface traction forces act on the part of Neumann boundary $\{0\} \times (0,4)$ whereas, the part $(0,4) \times \{4\}$ is traction free. Thus, $\Gamma_N:= \big(\{0\} \times (0,4) \big) \cup \big((0,4) \times \{4\}\big).$ The body is in contact with the rigid foundation on the part of contact boundary $\Gamma_C:=(0,4) \times \{0\}.$ 
		We consider discretizing the space $\b{V}$ using linear elements and dividing the time interval $I$ into uniform partitions. For our calculations, we have employed the following data, where the unit $daN/mm^2$ corresponds to deca-newtons per square millimeter.
		\begin{align*}
			E &= 200,~~ \nu=0.3, ~~\b{f}=(0,0)~daN/mm^2,\\
			\b{g}&=(0.02(5-y)t,-0.01t)~daN/mm^2,\\
			g_a&=0.0012~daN/mm^2,~~\b{u}^0 = \b{0}~m,~~T=1 ~sec.
		\end{align*}
	\end{example}
	\par 
	\vspace{-0.4 cm}
	As the exact solution $\b{u}$ is unknown in the above example, we calculate the energy norm error by calculating the difference between the discrete solutions  obtained on the consecutive meshes. The pictorial representation of the above example is depicted in Figure \ref{ch5_0}. Table \ref{ch5_Table1} and Table \ref{ch5_Table2} depicts the energy norm error and its order of convergence for the spatial variable $h$ and time variable $k$. Therein, $\textbf{DOF}$ refers to the degrees of freedom and $N$ denotes the number of time steps. We observe that the numerical convergence rates are near to 1, which are in accordance with the theoretical results derived in Section \ref{ch5sec5}.
	
	\begin{table}
		\centering
		\begin{tabular}{|c|c|c|c|c|c|} 
			\hline
			$N$&${\b{h}}$&$\b{k}$&$\textbf{DOF}$ & $\textbf{Total~error}$ & \textbf{Order} \\
			\hline
			$40$ &$2^{-1}$ & $0.025$ &$28$ &2.512 $\times$ $10^{-4}$&- \\ 
			$80$&$2^{-2}$ &  0.0125         &$104$&1.431 $\times$ $10^{-4}$  & 0.8579\\ 
			$160$&$2^{-3}$ &  0.00625      &$400$&7.460  $\times$ $10^{-5}$ & 0.9657\\ 
			$320$&$2^{-4}$ & 0.003125      &$1568$ &4.030 $\times$ $10^{-5}$  &      0.9006\\
			$640$&$2^{-5}$ & 0.0015625        &$6208$ &2.2210 $\times$ $10^{-5}$ &  0.8939\\
			\hline
		\end{tabular}
		\caption{Error and order of convergence with respect to mesh parameter $h$ for Example 5.1.}
		\label{ch5_Table1}
	\end{table}
	\begin{table}
		\centering
		\begin{tabular}{|c|c|c|c|c|c|} 
			\hline
			$N$&${\b{h}}$ & $\b{k}$&$\textbf{DOF}$ & $\textbf{Total~error}$ & \textbf{Order} \\
			\hline
			$20$ &$2^{-2}$ &  0.05         & $104$ &1.411 $\times$ $10^{-4}$&- \\ 
			$40$&$2^{-3}$ &   0.025        & $400$&7.410 $\times$ $10^{-4}$  & 0.9292\\ 
			$80$&$2^{-4}$ & 0.0125          &$1568$&4.010  $\times$ $10^{-5}$ & 0.8859\\ 
			$160$&$2^{-5}$ &  0.00625         &$6208$ &2.1220  $\times$ $10^{-5}$  &   0.9182\\
			$320$&$2^{-6}$ & 0.003125         &$12416$ &1.0810 $\times$ $10^{-5}$ &  0.9731\\
			\hline
		\end{tabular}
		\caption{Error and order of convergence with respect to time parameter $k$ for Example 5.1.}
		\label{ch5_Table2}
	\end{table}

	\bibliographystyle{unsrt}
	\bibliography{Bibliography}
\end{document}